\author{Beno\^it F. Sehba}
\title[Inequalities for the Fractional Bergman operators ]{Weighted norm inequalities for fractional Bergman operators}
\newtheorem{theorem}{T{\hskip 0pt\footnotesize\bf HEOREM}}[section]
\newtheorem{lemma}[theorem]{L{\hskip 0pt\footnotesize\bf EMMA}}
\newtheorem{proposition}[theorem]{P{\hskip 0pt\footnotesize\bf ROPOSITION}}
\newtheorem{corollary}[theorem]{C{\hskip 0pt\footnotesize\bf OROLLARY}}
\newtheorem{remark}[theorem]{R{\hskip 0pt\footnotesize\bf EMARK}}
\newcommand{\bprop} {\begin{proposition}}
\newcommand{\eprop} {\end{proposition}}
\newcommand{\btheo} {\begin{theorem}}
\newcommand{\etheo} {\end{theorem}}
\newcommand{\blem} {\begin{lemma}}
\newcommand{\elem} {\end{lemma}}
\newcommand{\bcor} {\begin{corollary}}
\newcommand{\ecor} {\end{corollary}}
\newcommand{\Be}{\begin{equation}}
\newcommand{\Ee}{\end{equation}}
\newcommand{\Bea}{\begin{eqnarray}}
\newcommand{\Eea}{\end{eqnarray}}
\newcommand{\Bes}{\begin{equation*}}
\newcommand{\Ees}{\end{equation*}}
\newcommand{\Beas}{\begin{eqnarray*}}
\newcommand{\Eeas}{\end{eqnarray*}}
\newcommand{\Ba}{\begin{array}}
\newcommand{\Ea}{\end{array}}
\begin{document}
\address{Beno\^it F. Sehba, Department of Mathematics, University of Ghana, Legon, P. O. Box LG 62, Legon, Accra, Ghana}
\email{bfsehba@ug.edu.gh}
\keywords{B\'ekoll\`e-Bonami weight, Bergman operator, Dyadic grid, Maximal function, Upper-half plane.}
\subjclass[2000]{Primary: 47B38 Secondary: 30H20, 42A61, 42B35, 42C40}
%\thanks{The second author was partially supported by the ANR project ANR-09-BLAN-0058-01}

%\noindent  ABSTRACT.\,

\maketitle
\begin{abstract}
We prove in this note one weight norm inequalities for some positive Bergman-type operators. 
\end{abstract}
\section{Introduction and results}
The set $\mathbb{R}_+^2:= \{z=x+iy\in \mathbb {C}: x\in \mathbb R,\,\,\, \textrm{and}\,\,\,y>0\}$ is called the upper-half plane. By a weight, we will mean a nonnegative locally integrable function on $\mathbb{R}_+^2$. Let $\alpha>-1$, and $1\le p<\infty$. For $\omega$ a weight, we denote by $L^p(\mathbb{R}_+^2, \omega dV_\alpha)$, the set of all functions $f$ defined on $\mathbb{R}_+^2$ such that
$$||f||_{p,\omega, \alpha }^p:=\int_{\mathbb{R}_+^2}|f(z)|^p\omega(z)dV_\alpha(z)<\infty$$
with $dV_\alpha(x+iy)=y^\alpha dxdy$. When $\omega=1$, we simply write $L^p(\mathbb{R}_+^2, dV_\alpha)$ and $\|\cdot\|_{p,\alpha}$ for the corresponding norm.
\vskip .1cm
For $\alpha>-1$ and $0\le \gamma<2+\alpha$, the positive fractional Bergman operator $T_{\alpha,\gamma}$ is defined by
\Be\label{eq:fracBergdef}
T_{\alpha,\gamma}f(z):=\int_{\mathbb{R}_+^2}\frac{f(w)}{|z-\overline{w}|^{2+\alpha-\gamma}}dV_\alpha(w).
\Ee
For $\gamma=0$, the operator $P_\alpha^+:=T_{\alpha, 0}$ is the positive Bergman projection. 
\vskip .3cm
 The above  operator can be seen as the upper-half plane analogue of the fractional integral operator (Riesz potential) defined  by
$$I_\alpha f(x)=\int_{\mathbb{R}^n}\frac{f(y)}{|x-y|^{n-\alpha}},\,\,\,x\in \mathbb{R}^n$$
for $0\leq \alpha<n$, $n\in \mathbb{N}$.
 We recall that the weighted boundedness of the latter was obtained by B. Muckenhoupt and R. L. Wheeden \cite{Wheeden}. More precisely, let $\mathcal{Q}$ be set of all cubes in $\mathbb{R}^n$. Let $1\le p,q<\infty$. We say a weight $\omega$ belongs to the class $A_{p,q}$ with $p\neq 1$, if 
$$[\omega]_{p,q}:=\sup_{Q\in \mathcal{Q}}\left(\frac{1}{|Q|}\int_{Q}\omega(x)^qdx\right)^{\frac{1}{q}}\left(\frac{1}{|Q|}\int_{Q}\omega(x)^{-p'}dx\right)^{\frac{1}{p'}}<\infty.$$
When $p=1$, we denote by $A_{1,q}$, the class of all weights $\omega$ such that 
$$[\omega]_{1,q}:=\sup_{Q\in \mathcal{Q}}\mbox{ess}\sup_{x\in Q}\left(\frac{1}{|Q|}\int_{Q}\omega(t)^qdt\right)^{\frac{1}{q}}\omega(x)^{-1}<\infty.$$ 
The results obtained by  B. Muckenhoupt and R. L. Wheeden \cite{Wheeden} summarize as follows.
\begin{theorem}\label{thm:wheeden}
Let $0<\alpha<n$. Then the following are satisfied.
\begin{itemize}
\item[(a)] Let $q=\frac{n}{n-\alpha}$. If $\omega\in A_{1,q}$, then there is a constant $C=C(\alpha,[\omega]_{1,q})$ such that 
\begin{equation}\label{eq01}\sup_{\lambda>0}\lambda\omega^q\left( \left\{x\in\mathbb{R}^n:|I_{\alpha}f(x)|>\lambda\right\}\right)^{1/q}\le C\int_{\mathbb{R}^n}|f(x)|\omega(x)dx.
\end{equation}
\item[(b)] Given $1<p<\frac{n}{\alpha}$, let $q$ be such that $\frac 1p-\frac 1q=\frac{\alpha}{n}$. If $\omega\in A_{p,q}$, then there is a constant $C=C(\alpha,[\omega]_{p,q})$ such that
\begin{equation}\label{eq2}
\left(\int_{\mathbb{R}^n}(\omega(x)|I_{\alpha}f(x)|)^q\mathrm{d}x\right)^{\frac{1}{q}}\le C\left(\int_{\mathbb{R}^n}(\omega(x)|f(x)|)^p\mathrm{d}x\right)^{\frac{1}{p}}.
\end{equation}

\end{itemize}

\end{theorem}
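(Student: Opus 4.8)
The plan is to derive both parts of Theorem~\ref{thm:wheeden} from a sparse (dyadic) model of $I_\alpha$, which keeps the dependence on $[\omega]_{p,q}$ transparent. By the one-third trick there are finitely many dyadic grids $\mathcal D_1,\dots,\mathcal D_N$ such that every cube $Q\subset\mathbb R^n$ is contained in some $Q'\in\bigcup_j\mathcal D_j$ with $\ell(Q')\le 6\ell(Q)$; for $f\ge 0$ this gives the pointwise bound $I_\alpha f\lesssim\sum_{j=1}^N I_\alpha^{\mathcal D_j}f$, where $I_\alpha^{\mathcal D}f=\sum_{Q\in\mathcal D}|Q|^{\alpha/n}\langle f\rangle_Q\,\mathbf{1}_Q$ and $\langle f\rangle_Q=\frac1{|Q|}\int_Q f$. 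A stopping-time selection on the dyadic averages then dominates each $I_\alpha^{\mathcal D}f$ by a sparse operator $\mathcal A_{\mathcal S,\alpha}f=\sum_{Q\in\mathcal S}|Q|^{\alpha/n}\langle f\rangle_Q\,\mathbf{1}_Q$, with $\mathcal S$ a sparse family (pairwise disjoint subsets $E_Q\subset Q$, $|E_Q|\ge\frac12|Q|$). Hence it suffices to prove (a) and (b) for $\mathcal A_{\mathcal S,\alpha}$, uniformly over sparse families.

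For part (b), set $\nu=\omega^q$ and $\sigma=\omega^{-p'}$, so that the hypothesis reads $\sup_Q\langle\nu\rangle_Q^{1/q}\langle\sigma\rangle_Q^{1/p'}=[\omega]_{p,q}$, and, after the substitution $f\mapsto f\sigma$, \eqref{eq2} is equivalent to $\|\mathcal A_{\mathcal S,\alpha}(f\sigma)\|_{L^q(\nu\,dx)}\lesssim\|f\|_{L^p(\sigma\,dx)}$. Dualizing against $g\ge 0$ with $\|g\|_{L^{q'}(\nu\,dx)}=1$, the pairing equals $\sum_{Q\in\mathcal S}|Q|^{1+\alpha/n}\langle f\sigma\rangle_Q\langle g\nu\rangle_Q=\sum_{Q\in\mathcal S}|Q|^{\alpha/n-1}\sigma(Q)\nu(Q)\,\langle f\rangle_Q^{\sigma}\langle g\rangle_Q^{\nu}$, where $\langle\cdot\rangle_Q^{\sigma}$ and $\langle\cdot\rangle_Q^{\nu}$ denote averages with respect to $d\sigma$ and $d\nu$. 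The one computation that matters is that the relation $\frac1p-\frac1q=\frac\alpha n$ forces the powers of $|Q|$ to cancel in $|Q|^{\alpha/n-1}\sigma(Q)^{1/p'}\nu(Q)^{1/q}$, so that $|Q|^{\alpha/n-1}\sigma(Q)\nu(Q)\le[\omega]_{p,q}\,\sigma(Q)^{1/p}\nu(Q)^{1/q'}$. H\"older's inequality in $\ell^q\times\ell^{q'}$, the embedding $\ell^1\hookrightarrow\ell^{q/p}$ (this is where $p\le q$ enters), and the Carleson embedding theorem for $\mathcal S$ with respect to $\sigma$ and to $\nu$ (both of which lie in $A_\infty$, since $\omega\in A_{p,q}$ implies $\sigma,\nu\in A_\infty$) then close the estimate and produce a constant $C=C(n,p,q,[\omega]_{p,q})$.

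For the endpoint (a), with $q=\frac n{n-\alpha}$, the hypothesis $\omega\in A_{1,q}$ says that $\langle\omega^q\rangle_Q\le[\omega]_{1,q}^q\,\operatorname*{ess\,inf}_{Q}\,\omega^q$, i.e.\ $\nu:=\omega^q$ is an $A_1$ weight with $[\nu]_{A_1}\le[\omega]_{1,q}^q$. Fixing $\lambda>0$ and a sparse family $\mathcal S$, I would perform a Calder\'on--Zygmund decomposition of $f$ at the appropriate height relative to the dyadic maximal function, split $\mathcal A_{\mathcal S,\alpha}f$ into a part estimated away from the union $E$ of the selected cubes --- handled by Chebyshev's inequality and the elementary local $L^1(dx)\to L^{q,\infty}(dx)$ bound for $\mathcal A_{\mathcal S,\alpha}$ that comes from sparseness --- and a part supported on $E$, and control $\nu(E)$ using the $A_1$ and reverse-H\"older properties of $\nu$. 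This is the same device by which $\nu\in A_1$ upgrades the unweighted weak $(1,1)$ inequality for the Hardy--Littlewood maximal operator to a weighted one, and it yields \eqref{eq01}.

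I expect the sparse estimate in part (b) to be the main obstacle: one must split the factor $|Q|^{\alpha/n}$ between the $\sigma$-side and the $\nu$-side so that the two H\"older exponents match and both Carleson embeddings can be applied --- exactly the point at which $\frac1p-\frac1q=\frac\alpha n$ and $p\le q$ are genuinely used --- whereas the dyadic reduction, the sparse extraction, and the endpoint Calder\'on--Zygmund argument are by now routine. A reader who prefers the original argument of Muckenhoupt and Wheeden may instead dominate $I_\alpha f$ pointwise by the fractional maximal function $M_\alpha f(x)=\sup_{Q\ni x}|Q|^{\alpha/n}\langle|f|\rangle_Q$, deduce $M_\alpha\colon L^p(\omega^p\,dx)\to L^q(\omega^q\,dx)$ (respectively $M_\alpha\colon L^1(\omega\,dx)\to L^{q,\infty}(\omega^q\,dx)$) directly from the $A_{p,q}$ (respectively $A_{1,q}$) condition by applying H\"older's inequality on each cube, and upgrade to $I_\alpha$ by a good-$\lambda$ inequality, which is available because $\omega^q\in A_\infty$.
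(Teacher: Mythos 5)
The paper does not prove Theorem~\ref{thm:wheeden} at all: it is quoted as background from Muckenhoupt--Wheeden \cite{Wheeden}, with \cite{Cruz2} cited for a modern elementary proof, so there is no in-paper argument to measure you against. That said, your proposal is essentially the Lacey--Moen--P\'erez--Torres/Cruz-Uribe sparse proof, which is exactly the template this paper transplants to the upper half-plane for its own results. Your part (b) is correct and complete in outline: the reduction to dyadic grids, the factorization $|Q|^{\alpha/n-1}\sigma(Q)\nu(Q)\le[\omega]_{p,q}\,\sigma(Q)^{1/p}\nu(Q)^{1/q'}$ forced by $\frac1p-\frac1q=\frac\alpha n$, and the combination of H\"older, $\ell^1\hookrightarrow\ell^{q/p}$, and the $A_\infty$-Carleson embedding all close as claimed; the only (cosmetic) difference from the paper's proof of its analogue, Theorem~\ref{thm:main1}, is that you apply H\"older with exponents $(q,q')$ and put the fractional gain on the $\sigma$-side, whereas the paper uses $(p,p')$ and absorbs the gain into the fractional maximal function $\mathcal{M}_{u,\alpha,\gamma}$ on the dual side -- both yield a constant of the same general shape. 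Your part (a) is thinner: the primary sketch (Calder\'on--Zygmund decomposition of $f$, Chebyshev off the exceptional set) is not really the mechanism that makes the endpoint work, and as written it would take real effort to close. The argument that actually runs -- and that the paper adapts in Theorem~\ref{thm:main3} and Theorem~\ref{thm:main4}, replacing reverse H\"older by a reverse-doubling property of B\'ekoll\`e--Bonami weights -- decomposes the level set $\{\mathcal{A}_{\mathcal{S},\alpha}f>\lambda\}$ into maximal dyadic cubes, splits the operator into ``in'' and ``out'' parts relative to each maximal cube, dualizes the ``in'' part so that one must estimate $\mathcal{A}^{\mathrm{in}}\nu$ pointwise, and uses $\nu\in A_1$ plus reverse H\"older to bound that by $C\,\nu(Q)^{1/q'}\omega(z)$. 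Your fallback route via the pointwise bound $M_\alpha f\lesssim$ (local averages), the elementary $L^1(\omega)\to L^{q,\infty}(\omega^q)$ estimate for $M_\alpha$ under $A_{1,q}$, and the good-$\lambda$ comparison of $I_\alpha$ with $M_\alpha$ for $A_\infty$ weights is the original Muckenhoupt--Wheeden argument and is sound, so the endpoint claim is recoverable even though your first sketch of it is not a proof.
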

 Our aim in this note is to provide corresponding results for the operator $T_{\alpha,\gamma}$. For this and to present our results, we need some other definitions.
\vskip .3cm

For any interval $I\subset \mathbb{R}$, we denote by $Q_I$ its associated Carleson square, that is the set $$Q_I:=\{z=x+iy\in \mathbb C:x\in I\,\,\,\textrm{and}\,\,\,0<y<|I|\}.$$

Let $\alpha>-1$, and $1<p<\infty$. Given a weight $\omega$, we say $\omega$ is in the B\'ekoll\`e-Bonami class $\mathcal{B}_{p,\alpha}$,  if the quantity
$$[\omega]_{\mathcal{B}_{p,\alpha}}:=\sup_{I\in \mathcal{I}}\left(\frac{1}{|Q_I|_\alpha}\int_{Q_I}\omega(z)dV_\alpha(z)\right)\left(\frac{1}{|Q_I|_\alpha}\int_{Q_I}\omega(z)^{1-p'}dV_\alpha(z)\right)^{p-1}$$
%$Q_I:=\{z=x+iy\in \mathbb C:x\in I\,\,\,\textrm{and}\,\,\,0<y<|I|\}$, $|Q_I|=\int_{Q_I}dV(z)$, $pp'=p+p'$. 
is finite. Here and all over the text, for a measurable set $E\subset \mathbb{R}_+^2$, $|E|_{\omega,\alpha}:=\int_{E}\omega(z)dV_\alpha(z)$ and we write $|E|_\alpha$ for $|E|_{1,\alpha}$. Also, we have used $\mathcal{I}$ to denote the set of all intervals of $\mathbb{R}$.
\vskip .2cm
It is now well known that for $1<p<\infty$, the operator $P_\alpha^+$ is bounded on $L^p(\mathbb{R}_+^2, \omega dV_\alpha)$ if and only if $\omega\in \mathcal{B}_{p,\alpha}$ (see \cite{Bek,BB,PR}). For $p=1$, we say $\omega\in \mathcal{B}_{1,\alpha}$, if 
$$[\omega]_{\mathcal{B}_{1,\alpha}}:=\sup_{I\in \mathcal{I}}\mbox{ess}\sup_{z\in Q_I}\left(\frac{1}{|Q_I|_{\alpha}}\int_{Q_I}\omega dV_{\alpha}\right)\omega(z)^{-1}<\infty.$$ 
For $1\le p,q<\infty$, we introduce the two following classes of weights: we say a weight $\omega$ belongs to the set $B_{p,q,\alpha}$ with $p\neq 1$, if 
$$[\omega]_{B_{p,q,\alpha}}:=\sup_{I\in \mathcal{I}}\left(\frac{1}{|Q_I|_{\alpha}}\int_{Q_I}\omega^qdV_{\alpha}\right)^{\frac{1}{q}}\left(\frac{1}{|Q_I|_{\alpha}}\int_{Q_I}\omega^{-p'}dV_{\alpha}\right)^{\frac{1}{p'}}<\infty.$$
We say $\omega$ belongs to $B_{1,q,\alpha}$, if 
$$[\omega]_{B_{1,q,\alpha}}:=\sup_{I\in \mathcal{I}}\mbox{ess}\sup_{z\in Q_I}\left(\frac{1}{|Q_I|_{\alpha}}\int_{Q_I}\omega^qdV_{\alpha}\right)^{\frac{1}{q}}\omega(z)^{-1}<\infty.$$ 
We observe that if $r=1+\frac{q}{p'},$ then $$[\omega^q]_{\mathcal{B}_{r,\alpha}} = [\omega]_{B_{p,q,\alpha}}^q.$$
%;\qquad [\omega^q]_{B_{1,\alpha}} = [\omega]_{B_{1,q,\alpha}}$$
The above classes can be compared with the classes of weights $A_{p,q}$ introduced by B. Muckemhoupt and L. Wheeden in relation with the study of weighted norm inequality for the Riesz potential (see \cite{Wheeden}). 
\vskip .3cm
For the strong inequality, we obtain the following.
\begin{theorem}\label{thm:main1}
Let $\alpha>-1$ and $0\le\gamma < 2 + \alpha,$ and $1<p<\frac{2+\alpha}{\gamma}.$ Define $q$ by $\frac{1}{p} - \frac{1}{q} = \frac{\gamma}{2+\alpha}.$ Assume that the weight $\omega$ belongs to the class $B_{p,q,\alpha}$. Then $T_{\alpha,\gamma}$ is bounded from $L^p(\omega^p\mathrm{d}V_{\alpha})$ to $L^q(\omega^q\mathrm{d}V_{\alpha})$ if and only if $\omega\in B_{p,q,\alpha}.$ Moreover, 
\begin{equation}\label{eq2}
\left(\int_{\mathbb{R}_+^2}(\omega(z)|T_{\alpha,\gamma}f(z)|)^q\mathrm{d}V_{\alpha}\right)^{\frac{1}{q}}\le C_{\alpha,p}([\omega]_{B_{p,q,\alpha}})^{1+\frac{p'}{p}+\frac{q}{p'}}\left(\int_{\mathbb{R}_+^2}(\omega(z)|f(z)|)^p\mathrm{d}V_{\alpha}\right)^{\frac{1}{p}}.
\end{equation}
\end{theorem}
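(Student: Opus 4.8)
The plan is to prove the equivalence in two steps, the quantitative inequality \eqref{eq2} emerging from the proof of sufficiency. For \emph{necessity} I would test the assumed boundedness on the functions \(f=\mathbf 1_{Q_I}\,\omega^{-p'}\), \(I\in\mathcal I\). Since \(2+\alpha-\gamma>0\), one has \(|z-\overline w|\le C\,|I|\) for all \(z,w\in Q_I\), hence \(T_{\alpha,\gamma}f(z)\gtrsim|Q_I|_\alpha^{\frac{\gamma}{2+\alpha}-1}\int_{Q_I}\omega^{-p'}\,dV_\alpha\) for every \(z\in Q_I\). Restricting the left-hand \(L^q(\omega^q dV_\alpha)\)-norm to \(Q_I\), using \(\int_{Q_I}\omega^p|f|^p\,dV_\alpha=\int_{Q_I}\omega^{-p'}\,dV_\alpha\) on the right, and invoking \(\tfrac1q+\tfrac1{p'}=1-\tfrac{\gamma}{2+\alpha}\) together with \(|Q_I|_\alpha\simeq|I|^{2+\alpha}\), a rearrangement yields \([\omega]_{B_{p,q,\alpha}}\lesssim\|T_{\alpha,\gamma}\|\); this uses nothing beyond the elementary lower bound on the kernel.

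For \emph{sufficiency} the first step is a dyadic reduction. I would use the standard fact that there exist finitely many dyadic grids \(\mathcal D^{(1)},\dots,\mathcal D^{(N)}\) of \(\mathbb R\) such that every interval \(I\subset\mathbb R\) is contained in some \(J\in\bigcup_j\mathcal D^{(j)}\) with \(|J|\le 6|I|\); writing \(|z-\overline w|^2=(x-u)^2+(y+v)^2\) and splitting the integral in \eqref{eq:fracBergdef} according to the dyadic order of magnitude of \(|z-\overline w|\), each piece lies in a Carleson square \(Q_I\) with \(z\in Q_I\), \(|I|\simeq|z-\overline w|\) and \(I\) from one of the grids, while \(|z-\overline w|^{-(2+\alpha-\gamma)}\simeq|Q_I|_\alpha^{\frac{\gamma}{2+\alpha}-1}\). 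Summing over scales gives the pointwise domination
\[T_{\alpha,\gamma}|f|\ \lesssim\ \sum_{j=1}^{N}\mathcal T_j|f|,\qquad \mathcal T_j g:=\sum_{I\in\mathcal D^{(j)}}|Q_I|_\alpha^{\frac{\gamma}{2+\alpha}-1}\Big(\int_{Q_I}g\,dV_\alpha\Big)\mathbf 1_{Q_I},\]
so it suffices to bound each dyadic fractional Bergman operator \(\mathcal T=\mathcal T_j\colon L^p(\omega^p dV_\alpha)\to L^q(\omega^q dV_\alpha)\) by a power of \([\omega]_{B_{p,q,\alpha}}\), and then sum over \(j\).

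For the dyadic operator I would work with the measures \(d\sigma:=\omega^{-p'}dV_\alpha\) and \(d\mu:=\omega^q dV_\alpha\); using \(\tfrac1q+\tfrac1{p'}=1-\tfrac{\gamma}{2+\alpha}\) and \(|Q_I|_\alpha\simeq|I|^{2+\alpha}\), the hypothesis \(\omega\in B_{p,q,\alpha}\) becomes the dyadic estimate \(|Q_I|_\alpha^{\frac{\gamma}{2+\alpha}-1}\mu(Q_I)^{1/q}\sigma(Q_I)^{1/p'}\le[\omega]_{B_{p,q,\alpha}}\). Since \(1<p\le q<\infty\) — the case \(p=q\) (i.e. \(\gamma=0\)) being the known B\'ekoll\`e--Bonami bound for \(P_\alpha^+\) — and \(\mathcal T\) is a positive dyadic operator, I would dominate \(\mathcal T|f|\) by a sum over a sparse subfamily, dualise against \(0\le g\in L^{q'}(\mu)\) with \(\|g\|_{L^{q'}(\mu)}=1\), estimate each Carleson square by Hölder's inequality together with the displayed dyadic estimate, and then pass from the sparse cubes to pairwise disjoint major subsets. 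This reduces matters to a product of a fractional Bergman maximal function of \(f\) and an ordinary Carleson-square maximal function of \(g\,\omega^q\), after which a final Hölder step and the weighted bounds for these two maximal operators — the second of which uses that \(\omega\in B_{p,q,\alpha}\) forces \(\omega^{-q'}\in\mathcal B_{q',\alpha}\) with a controlled constant (by Hölder's inequality, since \(q'\le p'\)) — close the argument. Keeping track of the powers of \([\omega]_{B_{p,q,\alpha}}\) accumulated at the three points — the use of the dyadic \(B_{p,q,\alpha}\) estimate and the two weighted maximal bounds — produces the exponent \(1+\tfrac{p'}{p}+\tfrac{q}{p'}\) in \eqref{eq2}, and summing over the \(N\) grids completes the proof.

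The dyadic domination and the necessity are routine; I expect the main obstacle to be the two-weight estimate for \(\mathcal T\) with \emph{explicit} control of the constant. One must arrange the sparse / stopping-time decomposition so that the fractional \(B_{p,q,\alpha}\) condition is used efficiently, and prove the two auxiliary weighted maximal inequalities with exactly the needed powers, which in turn rests on a quantitative reverse Hölder inequality (legitimate since \(B_{p,q,\alpha}\subset\mathcal B_{\infty,\alpha}\)) and on a Buckley-type bound for the Carleson-square maximal operator. A formally cleaner variant would replace the maximal-function step by the Sawyer-type testing characterisation of two-weight norm inequalities for positive dyadic operators, verifying the testing inequality on \(\mathbf 1_{Q_{I_0}}\sigma\): the contribution of the ancestors of \(Q_{I_0}\) is summed using the geometric decay of \(\{|Q_I|_\alpha^{\frac{\gamma}{2+\alpha}-1}\}\) and bounded by one power of \([\omega]_{B_{p,q,\alpha}}\) via the dyadic estimate, while the contribution of the descendants is handled by a further stopping time inside \(Q_{I_0}\).
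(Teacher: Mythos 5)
Your overall architecture for the sufficiency --- domination of $T_{\alpha,\gamma}$ by positive dyadic operators over finitely many grids, duality against $g\in L^{q'}(\omega^{-q'}\mathrm{d}V_{\alpha})$, a per-square application of the $B_{p,q,\alpha}$ condition, and a reduction to maximal operators --- is essentially the paper's. Your necessity argument (testing on $f=\mathbf 1_{Q_I}\omega^{-p'}$ and using the elementary kernel lower bound on $Q_I$) is a legitimate and arguably more direct alternative to the paper's route, which instead observes $\mathcal{M}_{\alpha,\gamma}f\le T_{\alpha,\gamma}f$ and invokes Lemma \ref{lem:weakfrac}.

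The genuine problem is the assertion that the weighted maximal bounds you need ``rest on a quantitative reverse H\"older inequality (legitimate since $B_{p,q,\alpha}\subset\mathcal{B}_{\infty,\alpha}$)''. Reverse H\"older is precisely the tool that is \emph{not} available here: B\'ekoll\`e--Bonami weights need not satisfy any reverse H\"older inequality over Carleson squares (they need not even be doubling in the relevant sense), and the paper explicitly identifies the failure of reverse H\"older as the obstruction to transplanting Cruz-Uribe's Euclidean argument. As written, that step of your plan is a dead end. The paper circumvents it in two ways that you should adopt. First, instead of the unweighted-measure maximal function of $g\omega^{q}$ (whose bound on $L^{q'}(\omega^{-q'}\mathrm{d}V_{\alpha})$ you propose to obtain by a Buckley-type argument), it uses the $\sigma$- and $u$-weighted maximal functions $\mathcal{M}^{d,\beta}_{\sigma,\alpha}$ and $\mathcal{M}^{d,\beta}_{u,\alpha,\gamma}$ with $\sigma=\omega^{-p'}$ and $u=\omega^{q}$; by Lemma \ref{fracmax} these are bounded with constants \emph{independent of the weight}, so no reverse H\"older or $A_\infty$-type input is required. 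Second, the passage from the nested Carleson squares $Q_I$ to the pairwise disjoint tops $T_I$ (your ``disjoint major subsets'') is effected by Lemma \ref{carnot}, $|Q_I|_{w,\alpha}\le C_{r,\alpha}[w]_{\mathcal{B}_{r,\alpha}}|T_I|_{w,\alpha}$, applied once to $u$ (contributing $[\omega]_{B_{p,q,\alpha}}^{q/p'}$) and once to $\sigma$ (contributing $[\omega]_{B_{p,q,\alpha}}^{p'/p}$); together with the single use of the $B_{p,q,\alpha}$ condition per square, this --- and not the maximal-function bounds --- is where the exponent $1+\frac{p'}{p}+\frac{q}{p'}$ in \eqref{eq2} actually arises. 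With these two substitutions your outline closes; without them the key quantitative step is unsupported.
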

%As commented earlier, the strong boundedness of $P_\alpha^+$ follows from the one of $T_{\alpha,\gamma}$. We then have the following.
We observe if $\gamma\ge 0$, then for any positive function $f$ and any $z=x+iy\in \mathbb{R}_+^2$, $y^\gamma P_\alpha^+f(z)\le T_{\alpha,\gamma}f(z)$. It follows that given two weights $\sigma$ and $\omega$ on $\mathbb{R}_+^2$, for $1\le p\le q<\infty$, the (strong) boundedness of $T_{\alpha,\gamma}$ from $L^p(\mathbb{R}_+^2, \sigma dV_\alpha)$ to $L^q(\mathbb{R}_+^2, \omega dV_\alpha)$ implies the boundedness of $P_\alpha^+$ from  $L^p(\mathbb{R}_+^2, \sigma dV_\alpha)$ to $L^q(\mathbb{R}_+^2, \omega dV_\eta)$ where $\eta=\alpha+q\gamma $. In particular, taking $\sigma=\omega$ and observing that when $\gamma=(2+\alpha)\left(\frac 1p-\frac 1q\right)$, $\eta=(2+\alpha)(\frac{q}{p}-1)+\alpha$, we deduce the following from the above result.
\begin{corollary}\label{cor:main1}
Let $\alpha>-1$ and $0\le\gamma < 2 + \alpha,$ and $1<p<\frac{2+\alpha}{\gamma}.$ Define $q$ by $\frac{1}{p} - \frac{1}{q} = \frac{\gamma}{2+\alpha}.$ Let $\omega$ be weight on $\mathbb{R}^2_+$. Assume that the weight $\omega\in B_{p,q,\alpha}$. Then $P_{\alpha}^+$ is bounded from $L^p(\omega^pdV_{\alpha})$ to $L^q(\omega^qdV_{\eta})$, with $\eta=(2+\alpha)(\frac{q}{p}-1)+\alpha$. Moreover, 
\begin{equation}\label{eq2cor}
\left(\int_{\mathbb{R}_+^2}(\omega(z)|P_{\alpha}^+f(z)|)^q\mathrm{d}V_{\eta}\right)^{\frac{1}{q}}\le C_{\alpha,p}([\omega]_{B_{p,q,\alpha}})^{1+\frac{p'}{p}+\frac{q}{p'}}\left(\int_{\mathbb{R}_+^2}(\omega(z)|f(z)|)^p\mathrm{d}V_{\alpha}\right)^{\frac{1}{p}}.
\end{equation}
\end{corollary}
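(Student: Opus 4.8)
The plan is to deduce Corollary~\ref{cor:main1} directly from Theorem~\ref{thm:main1} via the pointwise domination already noted in the text, namely that for $\gamma\ge 0$, any nonnegative $f$, and $z=x+iy\in\mathbb{R}_+^2$ one has $y^\gamma P_\alpha^+f(z)\le T_{\alpha,\gamma}f(z)$. First I would reduce to the case $f\ge 0$: since $|P_\alpha^+f(z)|\le P_\alpha^+|f|(z)$, it suffices to prove the inequality for nonnegative $f$, and then replacing $f$ by $|f|$ gives the general statement.

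Next I would record the exponent bookkeeping. With $\frac1p-\frac1q=\frac{\gamma}{2+\alpha}$ the chosen $\gamma$ equals $(2+\alpha)\!\left(\frac1p-\frac1q\right)$, so $q\gamma = (2+\alpha)\!\left(\frac{q}{p}-1\right)$ and hence $\eta := \alpha + q\gamma = (2+\alpha)\!\left(\frac{q}{p}-1\right)+\alpha$, exactly the value in the statement; in particular $dV_\eta(x+iy) = y^{q\gamma}\,dV_\alpha(x+iy)$.

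The core computation is then a one-line chain of inequalities. For $f\ge0$,
\begin{align*}
\left(\int_{\mathbb{R}_+^2}\bigl(\omega(z)\,P_\alpha^+f(z)\bigr)^q\,dV_\eta(z)\right)^{1/q}
&=\left(\int_{\mathbb{R}_+^2}\bigl(y^{\gamma}\,\omega(z)\,P_\alpha^+f(z)\bigr)^q\,dV_\alpha(z)\right)^{1/q}\\
&\le\left(\int_{\mathbb{R}_+^2}\bigl(\omega(z)\,T_{\alpha,\gamma}f(z)\bigr)^q\,dV_\alpha(z)\right)^{1/q}\\
&\le C_{\alpha,p}\,[\omega]_{B_{p,q,\alpha}}^{1+\frac{p'}{p}+\frac{q}{p'}}\left(\int_{\mathbb{R}_+^2}\bigl(\omega(z)\,f(z)\bigr)^p\,dV_\alpha(z)\right)^{1/p},
\end{align*}
where the first equality uses $dV_\eta = y^{q\gamma}dV_\alpha$, the first inequality is the pointwise domination $y^\gamma P_\alpha^+f\le T_{\alpha,\gamma}f$ raised to the power $q$ and integrated, and the last inequality is exactly \eqref{eq2} of Theorem~\ref{thm:main1} applied with this $f$. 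Since the hypotheses of the corollary ($\alpha>-1$, $0\le\gamma<2+\alpha$, $1<p<\frac{2+\alpha}{\gamma}$, the relation defining $q$, and $\omega\in B_{p,q,\alpha}$) are precisely those of Theorem~\ref{thm:main1}, the application is legitimate and \eqref{eq2cor} follows.

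There is essentially no obstacle here beyond bookkeeping: the only points requiring a word of care are checking that $\eta$ computed from $\alpha+q\gamma$ matches the displayed formula, verifying $dV_\eta=y^{q\gamma}dV_\alpha$, and noting that the pointwise bound $y^\gamma P_\alpha^+f(z)\le T_{\alpha,\gamma}f(z)$ requires $f\ge0$ (which is why the reduction to nonnegative $f$ comes first) and $\gamma\ge0$ together with $|z-\overline w|\ge |z-\overline z|/?$—more simply, it is immediate from $|z-\overline w|^{2+\alpha}\ge |z-\overline w|^{2+\alpha-\gamma}\,\cdot\, (\text{something}\ge y^{?})$; in fact the cleanest justification, already invoked in the text, is that $|z-\overline w|\ge |{\rm Im}(z)+{\rm Im}(w)|\ge y$, so $|z-\overline w|^{\gamma}\ge y^\gamma$ and therefore $\frac{1}{|z-\overline w|^{2+\alpha}}\le \frac{y^{-\gamma}}{|z-\overline w|^{2+\alpha-\gamma}}$, which upon integrating against $f\,dV_\alpha$ yields $P_\alpha^+f(z)\le y^{-\gamma}T_{\alpha,\gamma}f(z)$. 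The constant is inherited verbatim from Theorem~\ref{thm:main1}.
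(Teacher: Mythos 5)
Your proof is correct and is essentially identical to the paper's own derivation: the paper deduces the corollary from Theorem \ref{thm:main1} via the same pointwise bound $y^\gamma P_\alpha^+f(z)\le T_{\alpha,\gamma}f(z)$ (justified by $|z-\overline w|\ge \mathrm{Im}\,z+\mathrm{Im}\,w\ge y$) together with the identity $dV_\eta=y^{q\gamma}dV_\alpha$ for $\eta=\alpha+q\gamma=(2+\alpha)(\tfrac qp-1)+\alpha$. The only cosmetic issue is the half-finished alternative justifications near the end of your last paragraph, which should be deleted in favor of the final, correct one.
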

\vskip .2cm
For the limit case $p=1$, we obtain the following weak boundedness of the operator $T_{\alpha,\gamma}$.

\begin{theorem}\label{thm:main3}
Let $\alpha>-1$ and $0<\gamma<2+\alpha.$ Let $q = \frac{2+\alpha}{2+\alpha-\gamma}.$ Assume that the weight $\omega$ belongs to the class $B_{1,q,\alpha}$. Then $T_{\alpha,\gamma}$ is bounded from $L^1(\omega dV_{\alpha})$ into $L^{q,\infty}(\omega^qdV_{\alpha})$.  Moreover, \begin{equation}\label{eq1}\sup_{\lambda>0}\lambda\left| \left\{z\in\mathbb{R}^2_+:|T_{\alpha,\gamma}f(z)|>\lambda\right\}\right|_{\omega^q,{\alpha}}^{1/q}\le C(\alpha,\gamma)[\omega]_{B_{1,q,\alpha}}^{q}\int_{\mathbb{R}^2_+}|f(z)|\omega(z)dV_{\alpha}(z).
\end{equation}
\end{theorem}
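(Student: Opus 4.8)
The plan is to pass to a dyadic model, run a Calderón–Zygmund decomposition adapted to the fractional scaling, and exploit the Békollé–Bonami hypothesis in the form $\langle\omega^q\rangle^\alpha_{Q}\le[\omega]_{B_{1,q,\alpha}}^q\,(\mbox{ess}\inf_{Q}\omega)^q$ (equivalently $[\omega^q]_{\mathcal B_{1,\alpha}}=[\omega]_{B_{1,q,\alpha}}^q$, as recorded in the introduction), where $\langle g\rangle^\alpha_Q:=|Q|_\alpha^{-1}\int_Q g\,dV_\alpha$. First I would reduce $T_{\alpha,\gamma}$ to its dyadic model: as in the proof of Theorem~\ref{thm:main1}, there are finitely many dyadic grids $\mathcal D_1,\dots,\mathcal D_N$ of Carleson squares such that, for $f\ge 0$, $T_{\alpha,\gamma}f\le C_\alpha\sum_{i=1}^N\mathcal T^{\mathcal D_i}f$, where $\mathcal T^{\mathcal D}f:=\sum_{Q\in\mathcal D}\frac{\ell(Q)^\gamma}{|Q|_\alpha}\big(\int_Q f\,dV_\alpha\big)\mathbf 1_Q$, because $|z-\bar w|^{-(2+\alpha-\gamma)}\asymp\ell(Q)^{\gamma-2-\alpha}\asymp\ell(Q)^\gamma|Q|_\alpha^{-1}$ on the smallest Carleson box $Q$ of one of the grids whose roof contains both $z$ and $w$. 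Since $0<\gamma<2+\alpha$ forces $1<q<\infty$, the space $L^{q,\infty}(\omega^qdV_\alpha)$ is normable, so it suffices to prove the weak estimate for one $\mathcal T^{\mathcal D}$; and by the usual truncation we may take $f\ge 0$ bounded, supported in a fixed Carleson square and bounded below there, so that all the sums converge and the selection below terminates.

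Fix $\lambda>0$ and let $\{Q_j\}$ be the maximal $Q\in\mathcal D$ with $\frac{\ell(Q)^\gamma}{|Q|_\alpha}\int_Q f\,dV_\alpha>\lambda$; these are pairwise disjoint and contain every $Q\in\mathcal D$ with this property. Since $q=\frac{2+\alpha}{2+\alpha-\gamma}$ and $|Q|_\alpha\asymp\ell(Q)^{2+\alpha}$, the defining inequality rewrites as $|Q_j|_\alpha\lesssim\big(\lambda^{-1}\int_{Q_j}f\,dV_\alpha\big)^{q}$. Combining this with $|Q_j|_{\omega^q,\alpha}=\langle\omega^q\rangle^\alpha_{Q_j}|Q_j|_\alpha\le[\omega]_{B_{1,q,\alpha}}^q(\mbox{ess}\inf_{Q_j}\omega)^q|Q_j|_\alpha$ and with $(\mbox{ess}\inf_{Q_j}\omega)\int_{Q_j}f\,dV_\alpha\le\int_{Q_j}f\omega\,dV_\alpha$ gives $|Q_j|_{\omega^q,\alpha}\lesssim[\omega]_{B_{1,q,\alpha}}^q\,\lambda^{-q}\big(\int_{Q_j}f\omega\,dV_\alpha\big)^{q}$, and summing over $j$, using $q\ge 1$ (so $\sum_j a_j^q\le(\sum_j a_j)^q$) together with the disjointness of the $Q_j$, yields $\big|\bigcup_j Q_j\big|_{\omega^q,\alpha}\lesssim[\omega]_{B_{1,q,\alpha}}^q\,\lambda^{-q}\big(\int_{\mathbb R^2_+}f\omega\,dV_\alpha\big)^{q}$, which is exactly the bound \eqref{eq1} demands, restricted to $\bigcup_j Q_j$.

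It remains to control $\mathcal T^{\mathcal D}f$ off $\bigcup_j Q_j$, where every Carleson box $Q\ni z$ satisfies $\frac{\ell(Q)^\gamma}{|Q|_\alpha}\int_Q f\,dV_\alpha\le\lambda$. Splitting the (nested) family of such boxes at a suitable scale, the large-box part of $\mathcal T^{\mathcal D}f(z)$ is a convergent geometric series (here $\gamma<2+\alpha$ is used) bounded by a fixed multiple of $\lambda$, while the small-box part is a geometric series in $\ell(Q)^\gamma$ (here $\gamma>0$ is used) controlled by a dyadic (fractional) maximal function of $f$ times a power of the cut-off scale; hence $\{\mathcal T^{\mathcal D}f>C\lambda\}\setminus\bigcup_jQ_j$ sits in a level set of that maximal operator, which one then estimates by its weighted weak-type bound against the $\mathcal B_{1,\alpha}$ weight $\omega^q$, again paying only the factor $[\omega^q]_{\mathcal B_{1,\alpha}}=[\omega]_{B_{1,q,\alpha}}^q$. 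The \emph{main obstacle} is precisely this good part: the cut-off scale must be chosen so that, after the fractional rescaling $\lambda\mapsto\lambda^{q}\|f\|_{1,\alpha}^{\,1-q}$ implicit in it, the maximal-function estimate reproduces $\|f\omega\|_{1,\alpha}$ and not an unweighted norm, which forces the good-part selection to be carried out against the weight $\omega^q$ itself, and keeping only a first power of $[\omega^q]_{\mathcal B_{1,\alpha}}$ there is the point requiring care. An alternative that isolates the difficulty is to first dominate $\mathcal T^{\mathcal D}$ pointwise by a sparse operator $\sum_{Q\in\mathcal S}\frac{\ell(Q)^\gamma}{|Q|_\alpha}\big(\int_Q f\,dV_\alpha\big)\mathbf 1_Q$ — the geometric series over scales inside each stopping cube converges precisely because $\gamma>0$ — and then repeat the two previous steps for the sparse operator, where the disjoint major subsets $E_Q$ of the stopping cubes, together with $\langle\omega^q\rangle^\alpha_Q\le[\omega]_{B_{1,q,\alpha}}^q\,\mbox{ess}\inf_Q\omega^q$, supply the needed control.
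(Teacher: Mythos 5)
Your reduction to a dyadic model and your treatment of the exceptional set $\bigcup_j Q_j$ are correct: the stopping condition $\ell(Q_j)^{\gamma}|Q_j|_\alpha^{-1}\int_{Q_j}f\,dV_\alpha>\lambda$ together with $|Q_j|_\alpha\asymp \ell(Q_j)^{2+\alpha}$ and $q=\tfrac{2+\alpha}{2+\alpha-\gamma}$ does give $|Q_j|_{\omega^q,\alpha}\lesssim[\omega]_{B_{1,q,\alpha}}^q\lambda^{-q}\bigl(\int_{Q_j}f\omega\,dV_\alpha\bigr)^q$, and summing with $q\ge1$ and disjointness closes that half. The genuine gap is the good part, which is where the entire difficulty of Theorem \ref{thm:main3} lives, and neither of your two suggested routes is carried out. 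Your primary route (Hedberg-type splitting at a cut-off scale $\ell_0$ with $\ell_0^{\gamma-2-\alpha}\|f\|_{1,\alpha}\asymp\lambda$, then the weighted weak $(1,1)$ bound for $\mathcal M_\alpha^{d}$ against $u=\omega^q$) does not just "require care" — as sketched it fails: it lands you on the level set $\{\mathcal M_\alpha^{d}f>c\lambda^{q}\|f\|_{1,\alpha}^{1-q}\}$, whose $u$-measure is controlled by $[\omega]_{B_{1,q,\alpha}}^q\lambda^{-q}\|f\|_{1,\alpha}^{q-1}\int f\omega^q\,dV_\alpha$, and the quantity $\|f\|_{1,\alpha}^{q-1}\int f\omega^q\,dV_\alpha$ is \emph{not} dominated by $\bigl(\int f\omega\,dV_\alpha\bigr)^q$ (take $f=f_1+f_2$ with $f_1$ of mass $1/M$ living where $\omega\asymp M$ and $f_2$ of mass $1$ living where $\omega\asymp 1$: the former quantity is $\asymp M^{q-1}$, the latter is $O(1)$). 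Your alternative via sparse domination merely restates the problem: the weak $(1,q)$ endpoint for a fractional sparse operator is itself the nontrivial statement, and the "disjoint major subsets $E_Q$" give Carleson-type packing for the strong bound by duality, not the weak endpoint.

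For comparison, the paper closes the good part by an entirely different mechanism: it fixes $\Lambda$, establishes a priori that $\sup_{0<\lambda<\Lambda}\lambda^q|\mathbb E_\lambda|_{u,\alpha}<\infty$, writes $\mathbb E_\lambda$ as a union of maximal dyadic Carleson squares, and compares $\mathbb E_{2\lambda}$ with $\mathbb E_\lambda$ by splitting those squares into $\mathcal L_1\cup\mathcal L_2$ according to whether $|Q_I\cap\mathbb E_{2\lambda}|_{u,\alpha}\ge 2^{-q-1}|Q_I|_{u,\alpha}$; the $\mathcal L_2$ contribution is absorbed into $\tfrac12\sup_\lambda\lambda^q|\mathbb E_\lambda|_{u,\alpha}$, while on each $Q_I\in\mathcal L_1$ the localized operator $\mathcal Q_{\alpha,\gamma,I}^{\beta,\mathrm{in}}$ is dualized onto the weight, and the resulting sum $\sum_{K\subseteq I}|Q_K|_{u,\alpha}^{1/q'}$ is summed geometrically using the reverse doubling property of $\mathcal B_{1,\alpha}$ weights (Lemmas \ref{carnot} and \ref{l4}), which is exactly the substitute for the reverse H\"older inequality unavailable in this setting. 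That absorption-plus-reverse-doubling step is the missing idea in your write-up; without it (or a worked-out replacement), the estimate \eqref{eq1} is not proved off $\bigcup_j Q_j$.
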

It is not clear how to deduce the weak boundedness of the positive Bergman operator $P_\alpha^+$ from the one of $T_{\alpha,\gamma}$. We will then also prove the following.
\begin{theorem}\label{thm:main4}
Let $\alpha>-1$ and $0<\gamma<2+\alpha.$ Let $q = \frac{2+\alpha}{2+\alpha-\gamma}.$ Assume that the weight $\omega$ belongs to the class $B_{1,q,\alpha}$. Then $P_{\alpha}^+$ is bounded from $L^1(\omega dV_{\alpha})$ into $L^{q,\infty}(\omega^qdV_{\eta})$ with $\eta=(2+\alpha)(q-1)+\alpha$.  In this case, \begin{equation}\label{eq41}\sup_{\lambda>0}\lambda\left| \left\{z\in\mathbb{R}^2_+:|P_{\alpha}^+f(z)|>\lambda\right\}\right|_{\omega^q,{\eta}}^{1/q}\le C(\alpha,\gamma)[\omega]_{B_{1,q,\alpha}}^{2q-1}\int_{\mathbb{R}^2_+}|f(z)|\omega(z)dV_{\alpha}(z).
\end{equation}
\end{theorem}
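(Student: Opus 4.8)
The plan is to deduce the weak-type bound for $P_\alpha^+$ from the already-established weak-type bound for $T_{\alpha,\gamma}$ in Theorem~\ref{thm:main3}, exactly mimicking the reduction that produced Corollary~\ref{cor:main1} from Theorem~\ref{thm:main1}, but now in the weak setting. The starting observation is the pointwise inequality noted before Corollary~\ref{cor:main1}: for $\gamma\ge 0$, any nonnegative $f$, and any $z=x+iy\in\mathbb{R}_+^2$ one has $y^\gamma P_\alpha^+ f(z)\le T_{\alpha,\gamma} f(z)$. We may assume $f\ge 0$ without loss of generality (replace $f$ by $|f|$ and use that $P_\alpha^+$ has a positive kernel). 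Fix $\lambda>0$ and set $E_\lambda:=\{z\in\mathbb{R}_+^2: P_\alpha^+ f(z)>\lambda\}$. If $z=x+iy\in E_\lambda$, then $T_{\alpha,\gamma} f(z)\ge y^\gamma P_\alpha^+ f(z)>\lambda y^\gamma$, which is awkward because the threshold now depends on $y$. The standard device to get around this is a dyadic decomposition in the $y$-variable: write $E_\lambda=\bigcup_{k\in\mathbb{Z}} E_\lambda\cap S_k$ where $S_k:=\{z=x+iy: 2^k\le y<2^{k+1}\}$, so that on $E_\lambda\cap S_k$ we have $T_{\alpha,\gamma}f(z)>\lambda 2^{k\gamma}$, a constant threshold.

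Next I would estimate each piece. By Theorem~\ref{thm:main3},
\Be
\left|\{z: T_{\alpha,\gamma}f(z)>\lambda 2^{k\gamma}\}\right|_{\omega^q,\alpha}^{1/q}\le \frac{C(\alpha,\gamma)[\omega]_{B_{1,q,\alpha}}^q}{\lambda 2^{k\gamma}}\|f\|_{1,\omega,\alpha},
\Ee
hence $|E_\lambda\cap S_k|_{\omega^q,\alpha}\le \big(C[\omega]^q_{B_{1,q,\alpha}}\|f\|_{1,\omega,\alpha}\big)^q \lambda^{-q} 2^{-kq\gamma}$. The target measure is $|\cdot|_{\omega^q,\eta}$ with $\eta=(2+\alpha)(q-1)+\alpha=\alpha+q\gamma$ (using $\gamma=(2+\alpha)(1-1/q)$); since $dV_\eta(x+iy)=y^{q\gamma}dV_\alpha(x+iy)$ and on $S_k$ one has $y^{q\gamma}\le 2^{(k+1)q\gamma}$, we get $|E_\lambda\cap S_k|_{\omega^q,\eta}\le 2^{(k+1)q\gamma}|E_\lambda\cap S_k|_{\omega^q,\alpha}$. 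Combining,
\Be
|E_\lambda\cap S_k|_{\omega^q,\eta}\le 2^{q\gamma}\big(C[\omega]^q_{B_{1,q,\alpha}}\|f\|_{1,\omega,\alpha}\big)^q\lambda^{-q}.
\Ee
The right-hand side is independent of $k$, so summing over $k\in\mathbb{Z}$ diverges. This is the crux of the difficulty, and it is exactly why the theorem is not a soft corollary.

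To resolve this I would not sum the trivial bounds but instead use that for large $|k|$ the set $E_\lambda\cap S_k$ is empty or that one can absorb the $y^{q\gamma}$ factor more efficiently. The right approach is to run the dyadic argument \emph{without} first throwing away information: bound $\lambda^q|E_\lambda|_{\omega^q,\eta}=\lambda^q\int_{E_\lambda} y^{q\gamma}\,d|\cdot|_{\omega^q,\alpha}(z)$ and on $E_\lambda$ use $\lambda<P_\alpha^+ f(z)\le y^{-\gamma}T_{\alpha,\gamma}f(z)$, i.e. $\lambda y^\gamma< T_{\alpha,\gamma}f(z)$, to write $\lambda^q y^{q\gamma}<(T_{\alpha,\gamma}f(z))^q$ pointwise on $E_\lambda$; therefore
\Be
\lambda^q|E_\lambda|_{\omega^q,\eta}\le \int_{E_\lambda}(T_{\alpha,\gamma}f(z))^q\omega(z)^q\,dV_\alpha(z).
\Ee
But this would require a strong-type $L^q$ bound on $T_{\alpha,\gamma}$ applied to an $L^1$ input, which is not available, so instead I would interpolate between the two scales: split $E_\lambda=A\cup B$ where $A=E_\lambda\cap\{y\le \rho\}$ and $B=E_\lambda\cap\{y>\rho\}$ for a radius $\rho=\rho(\lambda)$ to be optimized, bound $|A|_{\omega^q,\eta}\le \rho^{q\gamma}|A|_{\omega^q,\alpha}$ via Theorem~\ref{thm:main3} at level $\lambda$, and bound $|B|_{\omega^q,\eta}$ by pushing the $y$-integration against the decay $2^{-kq\gamma}$ from the pieces with $y=2^k>\rho$, which gives a convergent geometric series $\sum_{2^k>\rho}2^{-kq\gamma}2^{kq\gamma}$... \emph{again} a nonconvergent sum unless I also control $|B|_{\omega^q,\alpha}$ by something decaying. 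The honest resolution: for $y>\rho$, $P_\alpha^+f(z)>\lambda$ forces via the pointwise bound $T_{\alpha,\gamma}f(z)>\lambda\rho^\gamma$ on \emph{all} of $B$, so $|B|_{\omega^q,\alpha}\le (C[\omega]^q\|f\|_{1,\omega,\alpha})^q(\lambda\rho^\gamma)^{-q}$; but the $dV_\eta$-mass of $B$ still needs the $y^{q\gamma}$ factor which is unbounded as $y\to\infty$. The genuinely correct move is therefore to observe that $P_\alpha^+ f(z)$ itself decays: for $f\in L^1(\omega dV_\alpha)$ and $y$ large, $P_\alpha^+f(x+iy)\le Cy^{-(2+\alpha)}\int|f|\,dV_\alpha$-type bounds force $E_\lambda$ to be contained in a region $y\lesssim(\|f\|/\lambda)^{1/(2+\alpha)}$ where the integrability fails, and one checks the power of $\lambda$ works out to exactly $-q$.

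The cleanest path, which I would actually write, avoids all of this: apply Theorem~\ref{thm:main3} with the \emph{substitution} replacing $\omega$ by $\omega$ and noting that weak $(1,q)$ boundedness into $L^{q,\infty}(\omega^q dV_\alpha)$ is equivalent, by the layer-cake identity and a change of the measure $dV_\alpha\to dV_\eta$ together with the pointwise comparison $y^\gamma P_\alpha^+ f\le T_{\alpha,\gamma}f$, to a restricted weak-type estimate for $P_\alpha^+$; quantitatively, for any measurable $F\subset\mathbb{R}_+^2$,
\Be
\int_{F}(P_\alpha^+ f)^q\,\omega^q\,dV_\eta=\int_F (P_\alpha^+f)^q y^{q\gamma}\omega^q\,dV_\alpha\le \int_F (T_{\alpha,\gamma}f)^q\omega^q\,dV_\alpha,
\Ee
and then estimate the last integral on the super-level set $F=E_\lambda$ by the weak-type bound of Theorem~\ref{thm:main3} via the standard distributional inequality $\int_{E_\lambda}(T_{\alpha,\gamma}f)^q\omega^q dV_\alpha\le C\lambda^{q}\|T_{\alpha,\gamma}f\|^q_{L^{q,\infty}(\omega^q dV_\alpha)}\cdot$(log-free bound) — here one uses that on $E_\lambda\subset\{T_{\alpha,\gamma}f>\lambda\}$... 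I expect that a careful truncation of $T_{\alpha,\gamma}f$ at height $\lambda$, together with the fact that $\{T_{\alpha,\gamma}f>t\}\supset E_\lambda\cap\{y^\gamma>t/\lambda\}$ degrades controllably, yields the stated exponent $[\omega]_{B_{1,q,\alpha}}^{2q-1}$: one factor $[\omega]_{B_{1,q,\alpha}}^q$ comes directly from Theorem~\ref{thm:main3}, and the extra $[\omega]_{B_{1,q,\alpha}}^{q-1}$ arises from a Bekoll\'e--Bonami-type reverse-H\"older or $|Q_I|_\eta$ versus $|Q_I|_\alpha$ comparison needed to pass between the $dV_\alpha$ and $dV_\eta$ normalizations on Carleson squares. \textbf{The main obstacle}, as the circular attempts above show, is precisely this passage between the two weighted measures at the level of a \emph{weak} (non-integrated) estimate: unlike in the strong-type Corollary~\ref{cor:main1}, one cannot simply invoke $y^{q\gamma}dV_\alpha=dV_\eta$ under a single integral sign, and the fix requires either a direct Calder\'on--Zygmund-type stopping-time decomposition of $E_\lambda$ adapted to the dyadic grid in the $y$-variable or an honest use of the decay of $P_\alpha^+f$ at infinity to truncate $E_\lambda$ to a region of finite $dV_\eta$-mass.
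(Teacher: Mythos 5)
Your proposal does not contain a proof: every route you open is abandoned before it closes. The slab decomposition $E_\lambda=\bigcup_k E_\lambda\cap S_k$ yields, as you yourself note, a bound independent of $k$ whose sum diverges; the pointwise substitution $\lambda^q y^{q\gamma}<(T_{\alpha,\gamma}f)^q$ would need a strong $L^1\to L^q$ bound that does not exist; the $\rho$-splitting runs into the same divergence; and the concluding ``cleanest path'' ends with ``I expect that a careful truncation \dots yields the stated exponent,'' which is a conjecture, not an argument. You have correctly diagnosed \emph{why} Theorem \ref{thm:main4} is not a corollary of Theorem \ref{thm:main3} --- the factor $y^{q\gamma}$ relating $dV_\eta$ to $dV_\alpha$ cannot be absorbed at the level of a single distribution-function inequality --- and you even name the right remedy (a direct stopping-time decomposition of $E_\lambda$), but you never carry it out, and the exponent $2q-1$ is asserted rather than derived.

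What the paper actually does is rerun, from scratch, the maximal-Carleson-square argument of the proof of Theorem \ref{thm:main3} for the dyadic operator $\mathcal{Q}_\alpha^\beta$ (the $\gamma=0$ model of $P_\alpha^+$), with the target measure $u\,dV_\eta$, $u=\omega^q$. The change of measure is handled \emph{square by square} using the geometry of Carleson squares rather than globally: since $y\le |K|$ on $Q_K$ and $y\simeq |I|$ on the top half $T_I$, one has
\begin{equation*}
|Q_K|_{u,\eta}\le |Q_K|_\alpha^{\,q-1}|Q_K|_{u,\alpha},
\qquad
|Q_I|_{u,\eta}^{\,1-q}\le |T_I|_{u,\eta}^{\,1-q}\lesssim |Q_K|_\alpha^{\,(1-q)(q-1)}|T_I|_{u,\alpha}^{\,1-q}
\quad (K\subseteq I),
\end{equation*}
and the net factor $|Q_K|_\alpha^{1/q'}$ that these inequalities deposit on each summand converts $\mathcal{Q}_{\alpha,I}^{\beta,\,\mathrm{in}}$ into precisely the fractional operator $\mathcal{Q}_{\alpha,\gamma,I}^{\beta,\,\mathrm{in}}$ (recall $\gamma/(2+\alpha)=1/q'$). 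At that point the estimate $\mathcal{Q}_{\alpha,\gamma,I}^{\beta,\,\mathrm{in}}u(z)\le C[\omega]_{B_{1,q,\alpha}}^{1+q/q'}\omega(z)\,|Q_I|_{u,\alpha}^{1/q'}$ already established in the proof of Theorem \ref{thm:main3} applies verbatim; one further application of Lemma \ref{carnot} to replace $|Q_I|_{u,\alpha}$ by $|T_I|_{u,\alpha}$ (needed so that the top halves are disjoint and the sum over $\mathcal{L}_1$ telescopes) costs the extra power $[\omega]_{B_{1,q,\alpha}}^{q/q'}$, and this is exactly where $2q-1$ replaces the exponent $q$ of Theorem \ref{thm:main3}. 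The a priori finiteness of $\sup_{0<\lambda<\Lambda}\lambda^q|\mathbb{E}_\lambda|_{u,\eta}$ is likewise checked directly, by verifying the testing condition of Lemma \ref{lem:weakfrac} for the measure $d\mu=u\,dV_\eta$ via $|Q_I|_{u,\eta}\le |Q_I|_\alpha^{q-1}|Q_I|_{u,\alpha}$. To repair your write-up you would need to supply this entire decomposition; none of the steps you sketched can be completed as stated.
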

% Even though we are not interested here in the sharp estimates of the above operators, 
 It is clear from \cite{Laceyetal, Sehba2} that our results are not sharp in terms of dependence on  $[\omega]_{B_{p,q,\alpha}}$. In particular, the power of $[\omega]_{B_{p,q,\alpha}}$ in the inequality (\ref{eq2}) is coarse. We also illustrate this fact with the following estimate for a concrete example of exponents that is inspired from \cite{Laceyetal}.
\vskip .3cm 
Put $$p_0 = \frac{2-\frac{\gamma}{2+\alpha}}{\frac{\gamma}{2+\alpha} - \left(\frac{\gamma}{2+\alpha}\right)^{2} + 1};\qquad\qquad q_0 = \frac{2-\frac{\gamma}{2+\alpha}}{1-\frac{\gamma}{2+\alpha}}.$$ Observe that 
$$
\frac{1}{q_0} = \frac{1}{p_0} - \frac{\gamma}{2+\alpha}$$ and $$\frac{q_0}{p_0'} = 1 - \frac{\gamma}{2+\alpha}.$$
Note also that $q_0<1+\frac{p_0'}{p_0}+\frac{q_0}{p_0'}$.
% \frac{1}{p_0'} = \frac{1}{q_0}\left(1 - \frac{\gamma}{2+\alpha}\right); & \hspace{3cm} \frac{q_0'}{q_0}\times \frac{2+\alpha}{2+\alpha-\gamma} = 1
%\end{array}
%$$
%$\frac{1}{q_0'}\left(1 - \frac{\gamma}{2+\alpha}\right) = \frac{1}{q_0}$
\begin{theorem}\label{thm:main2}
Let $\omega\in B_{p_0,q_0,\alpha},\,\, \alpha>-1.$ Then $T_{\alpha,\gamma}:L^{p_0}(\omega^{p_0}dV_{\alpha})\longrightarrow L^{q_0}(\omega^{q_0}dV_{\alpha})$ is bounded. Moreover, $$\|T_{\alpha,\gamma}\|_{L^{p_0}(\omega^{p_0}dV_{\alpha})\longrightarrow L^{q_0}(\omega^{q_0}dV_{\alpha})}\le C_{p_0,q_0,\alpha}[\omega]_{B_{p_0,q_o,\alpha}}^{q_0}.$$
\end{theorem}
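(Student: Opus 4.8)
The key observation is that Theorem \ref{thm:main1} already delivers boundedness of $T_{\alpha,\gamma}$ from $L^{p_0}(\omega^{p_0}dV_\alpha)$ to $L^{q_0}(\omega^{q_0}dV_\alpha)$ with operator norm bounded by $C_{p_0,q_0,\alpha}[\omega]_{B_{p_0,q_0,\alpha}}^{1+\frac{p_0'}{p_0}+\frac{q_0}{p_0'}}$, so the only thing to improve is the exponent on the characteristic, from $1+\frac{p_0'}{p_0}+\frac{q_0}{p_0'}$ down to $q_0$ (and indeed $q_0<1+\frac{p_0'}{p_0}+\frac{q_0}{p_0'}$ as noted, so this is a genuine sharpening). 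The plan is to retrace the proof of Theorem \ref{thm:main1} but exploit the special algebraic identities satisfied by $p_0,q_0$ — namely $\frac{q_0}{p_0'}=1-\frac{\gamma}{2+\alpha}$ — which make the relevant exponents collapse.

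First I would reduce, via the standard dyadic technique, to a dyadic model operator: replace $T_{\alpha,\gamma}$ by a sum over a finite family of dyadic grids of positive dyadic operators of the form $T^{\mathcal D}_{\alpha,\gamma}f(z)=\sum_{Q_I\in\mathcal D}|Q_I|_\alpha^{\frac{\gamma}{2+\alpha}-1}\left(\int_{Q_I}f\,dV_\alpha\right)\mathbf 1_{Q_I}(z)$, using that $|z-\overline w|^{2+\alpha-\gamma}\approx |Q_I|_\alpha^{1-\frac{\gamma}{2+\alpha}}$ when $z,w$ lie in a common Carleson box of an adapted grid; this step is already implicit in the machinery behind Theorem \ref{thm:main1}. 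Then, passing to $\sigma=\omega^{-p_0'}$ and testing in the usual way, the boundedness $L^{p_0}(\omega^{p_0}dV_\alpha)\to L^{q_0}(\omega^{q_0}dV_\alpha)$ is equivalent to an estimate for the bilinear form $\sum_{Q_I}|Q_I|_\alpha^{\frac{\gamma}{2+\alpha}-1}\left(\int_{Q_I} g\,\omega^{q_0}dV_\alpha\right)\left(\int_{Q_I} f\,\sigma\,dV_\alpha\right)$ against $\|g\|_{L^{q_0'}(\omega^{q_0}dV_\alpha)}\|f\|_{L^{p_0}(\sigma dV_\alpha)}$, which one attacks by a Carleson-embedding / stopping-time argument on the two measures $\omega^{q_0}dV_\alpha$ and $\sigma\,dV_\alpha$.

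The heart of the matter is the stopping-time estimate: build sparse/stopping families adapted to $f\sigma$ and to $g\omega^{q_0}$ and bound the principal-cube sum. Here is where the exponents enter. On a stopping cube $Q$ one gains a factor that, after Hölder with exponents $q_0'$ and $p_0$ and the $B_{p_0,q_0,\alpha}$ testing condition, produces $[\omega]_{B_{p_0,q_0,\alpha}}$ raised to a power governed by how the averages $\frac{1}{|Q|_\alpha}\int_Q\omega^{q_0}dV_\alpha$ and $\frac{1}{|Q|_\alpha}\int_Q\sigma dV_\alpha$ recombine; using $\frac{q_0}{p_0'}=1-\frac{\gamma}{2+\alpha}$ the scaling exponent $|Q|_\alpha^{\frac{\gamma}{2+\alpha}-1}$ exactly matches $\left(\frac{|Q|_{\omega^{q_0},\alpha}}{|Q|_\alpha}\right)^{?}\left(\frac{|Q|_{\sigma,\alpha}}{|Q|_\alpha}\right)^{?}$ in such a way that a single application of the $B_{p_0,q_0,\alpha}$ condition, rather than the cruder chain used for general $p$, suffices — giving the clean exponent $q_0$. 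I would also use the observation from the excerpt that $[\omega^{q_0}]_{\mathcal B_{r,\alpha}}=[\omega]_{B_{p_0,q_0,\alpha}}^{q_0}$ with $r=1+\frac{q_0}{p_0'}$, so that the relevant Békollè–Bonami characteristic of the single weight $\omega^{q_0}$ is exactly $[\omega]_{B_{p_0,q_0,\alpha}}^{q_0}$, and invoke the known linear-in-characteristic bound for the dyadic Bergman-type operator with respect to $\mathcal B_{r,\alpha}$.

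The main obstacle I anticipate is bookkeeping the geometric-summation step in the sparse bound so that the power of the characteristic does not accidentally inflate: one must be careful that the two stopping structures (for $f\sigma$ and $g\omega^{q_0}$) interact so that the exponential gain along each stopping tree is controlled by a single power of $[\omega]_{B_{p_0,q_0,\alpha}}$, which is precisely what the identity $\frac{q_0}{p_0'}=1-\frac{\gamma}{2+\alpha}$ buys us but which requires checking that no extra factor of the characteristic is needed to pass from the off-diagonal dyadic operator to the diagonal Carleson embedding. Once the sparse bound with constant $\lesssim [\omega]_{B_{p_0,q_0,\alpha}}^{q_0}$ is in hand for each dyadic model, summing over the finitely many grids and undoing the dyadic reduction completes the proof.
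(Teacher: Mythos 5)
Your plan is correct and follows essentially the same route as the paper: reduce to the dyadic operators $\mathcal{Q}_{\alpha,\gamma}^{\beta}$, dualize with $\sigma=\omega^{-p_0'}$ and $u=\omega^{q_0}$, use the identity $\frac{q_0}{p_0'}=1-\frac{\gamma}{2+\alpha}$ so that the prefactor $\frac{|Q_I|_{u,\alpha}}{|Q_I|_{\alpha}}\bigl(\frac{|Q_I|_{\sigma,\alpha}}{|Q_I|_{\alpha}}\bigr)^{1-\frac{\gamma}{2+\alpha}}$ is bounded by a single application of the $B_{p_0,q_0,\alpha}$ condition yielding exactly $[\omega]_{B_{p_0,q_0,\alpha}}^{q_0}$, and then conclude by the Carleson-embedding step (disjointness of the top halves $T_I$ plus the weighted fractional maximal function bound of Lemma \ref{fracmax}). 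The only cosmetic difference is that no genuine stopping-time construction is needed, since the family of Carleson boxes is already sparse via their upper halves, and the side remark about invoking a linear $\mathcal{B}_{r,\alpha}$ bound is not used in the actual argument.
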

For our proofs, we follow the now standard trend of techniques of sparse domination using dyadic grids. As observed above, our operators are clearly analogue of the Riesz potential. We note that a simplification of the proofs  of the results in Theorem \ref{thm:wheeden} was recently obtained by D. Cruz-Uribe \cite{Cruz2}. We follow here the approach in the online version of \cite{Cruz2} (the reader is advised to consult this online version and not the published one). We note that there is a natural sparse family (see for example \cite{Cruz2} for a definition of sparseness) on the upper-half plane made of  Carleson squares. 
 
 The strong inequalities are easier to prove than the weak inequalities and we only provide a proof here for the sake of the reader not used to these techniques. For the weak type estimates, we remark that one of the key arguments in the online version of \cite{Cruz2} is the reverse H\"older's inequality, a tool that is not available in our setting. To overcome this difficulty, we use a reverse doubling property satisfied by the B\'ekoll\'e-Bonami weights with a careful consideration of the involved constant. %This approach even allows us to obtain a better estimate than the one given in \cite{Cruz2}. 
 In the case of weak type estimate for the positive Bergman operator, there is a further difficulty due to the change of weight (power of the distance to the boundary). There is another cost to pay to overcome this other difficulty which is illustrated by the change of the power in the constant in (\ref{eq41}). 
 
 The question of sharp off-diagonal estimates for the Bergman projection has been partially answered in \cite{Sehba2}. Its extension to the full upper-triangle and Sawyer-type characterizations are considered in a forthcoming paper. Note also that in \cite{Sehba3}, we obtained some bump-conditions for the two-weight boundedness of the above fractional Bergman operators.
\vskip .1cm
In the next section, we recall some useful facts and results needed in our proofs. Here, we particularly point out the fact that our results essentially follow from their dyadic counterparts. In Section 3, we prove the weak type results. The strong inequalities are proved in Section 4. 
\vskip .1cm
 Given two positive quantities $A$ and $B$, the notation $A\lesssim B$ (resp. $B\lesssim A$) will mean that there is a universal constant $C>0$ such that $A\le CB$ (resp. $B\le CA$). When $A\lesssim B$ and $B\lesssim A$, we write $A\backsimeq B$. Notation $C_\alpha$ or $C(\alpha)$ means that the constant $C$ depends on the parameter $\alpha$.

\section{Preliminaries}
\subsection{Some properties of weights}

The following is an easy consequence of the H\"older's inequality (see \cite[Lemma 2.1.]{CarnotBenoit} for the case $\alpha=0$).
\begin{lemma}\label{carnot}
Let $1<p<\infty$, and $\alpha>-1$. Let $I\subset \mathbb{R}$ be an interval and denote by $T_I$ the upper half of the Carleson square $Q_I$. Assume that $\omega\in \mathcal{B}_{p,\alpha}.$ Then  \Be\label{eq:ineqweightsquare} |Q_I|_{\omega,\alpha} \le C_{p,\alpha}[\omega]_{\mathcal{B}_{p,\alpha}}|T_I|_{\omega,\alpha}\Ee
where $C_{p,\alpha}:=\max\{2,\left(\frac{2^{1+\alpha}}{2^{1+\alpha}-1}\right)^p\}$.
\end{lemma}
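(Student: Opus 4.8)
The plan is to estimate the integral of $\omega$ over the Carleson square $Q_I$ by comparing it with the integral over the top half $T_I$, using a dyadic decomposition of $Q_I$ into its "tents" at dyadic heights. Concretely, write $I$ fixed and let $I^{(k)}$, $k\ge 0$, denote a nested sequence of intervals with $|I^{(k)}|=2^{-k}|I|$ (for instance the left half at each stage, or any choice of dyadic children), so that $Q_{I^{(0)}}=Q_I$ and $Q_{I^{(k)}}\supset Q_{I^{(k+1)}}$, and the sets $T_{I^{(k)}}:=Q_{I^{(k)}}\setminus Q_{I^{(k+1)}}$ — each essentially the upper half of $Q_{I^{(k)}}$ — are pairwise disjoint with $Q_I=\bigcup_{k\ge 0}T_{I^{(k)}}$ up to a null set. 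First I would record the elementary scaling identity $|Q_{I^{(k)}}|_\alpha=C_\alpha 2^{-(2+\alpha)k}|I|^{2+\alpha}$, so that $|T_{I^{(k)}}|_\alpha\backsimeq |Q_{I^{(k)}}|_\alpha$ with comparison constants depending only on $\alpha$; the precise constant $\frac{2^{1+\alpha}}{2^{1+\alpha}-1}$ appearing in the statement comes from summing the geometric series $\sum_{k}2^{-(1+\alpha)k}$ that controls $|Q_I|_\alpha/|T_I|_\alpha$.

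Next, the key point is to control the $\omega$-mass of each thin tent $T_{I^{(k)}}$ by that of the top tent $T_I$. Applying Hölder's inequality on $T_{I^{(k)}}$ with exponents $p$ and $p'$ gives
\Be
|T_{I^{(k)}}|_\alpha=\int_{T_{I^{(k)}}}\omega^{1/p}\omega^{-1/p}\,dV_\alpha\le \left(\int_{T_{I^{(k)}}}\omega\,dV_\alpha\right)^{1/p}\left(\int_{T_{I^{(k)}}}\omega^{1-p'}\,dV_\alpha\right)^{1/p'},
\Ee
and since $T_{I^{(k)}}\subset Q_{I^{(k)}}$ and $\omega\in\mathcal{B}_{p,\alpha}$, the second factor is bounded by $[\omega]_{\mathcal{B}_{p,\alpha}}^{1/p'}|Q_{I^{(k)}}|_\alpha\,|Q_{I^{(k)}}|_{\omega,\alpha}^{-1/(p-1)}\cdot(\cdots)$; rearranging the Békollè–Bonami inequality yields
\Be
|T_{I^{(k)}}|_{\omega,\alpha}\ge C_{p,\alpha}'[\omega]_{\mathcal{B}_{p,\alpha}}^{-1}\left(\frac{|T_{I^{(k)}}|_\alpha}{|Q_{I^{(k)}}|_\alpha}\right)^{p}|Q_{I^{(k)}}|_{\omega,\alpha}.
\Ee
Because $|T_{I^{(k)}}|_\alpha/|Q_{I^{(k)}}|_\alpha$ is a fixed constant $c_\alpha=1-2^{-(2+\alpha)}$ independent of $k$, this says that each tent carries a definite proportion (up to $[\omega]_{\mathcal{B}_{p,\alpha}}$) of the $\omega$-mass of the Carleson square below it. In particular the $k=0$ case gives $|Q_I|_{\omega,\alpha}\le C_{p,\alpha}[\omega]_{\mathcal{B}_{p,\alpha}}|T_I|_{\omega,\alpha}$ directly once one checks that the constant produced is exactly $\max\{2,(\frac{2^{1+\alpha}}{2^{1+\alpha}-1})^{p}\}$; the two alternatives in the max correspond to whether $p$ is so close to $1$ that the crude bound $|Q_I|_{\omega,\alpha}\le 2|T_I|_{\omega,\alpha}$ (available when $\omega$ is comparable to a constant on the relevant scales, or simply from the geometric-series tail) is the better one.

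The main obstacle, and the only place requiring care, is bookkeeping the constant: one must run the Hölder estimate so that only the self-improving factor $(\,|T_{I^{(k)}}|_\alpha/|Q_{I^{(k)}}|_\alpha\,)^p=c_\alpha^p$ and the single power $[\omega]_{\mathcal{B}_{p,\alpha}}$ appear, with no stray dependence on $k$, and then sum. Summing $|Q_I|_{\omega,\alpha}=\sum_{k\ge 0}|T_{I^{(k)}}|_{\omega,\alpha}$ against the lower bound above, together with $|Q_{I^{(k)}}|_{\omega,\alpha}\ge |T_I|_{\omega,\alpha}$ is not quite what is wanted; instead one uses the $k=0$ instance alone, and the geometric decay $|T_{I^{(k)}}|_\alpha\backsimeq 2^{-(1+\alpha)k}|T_I|_\alpha$ is what forces the constant $\frac{2^{1+\alpha}}{2^{1+\alpha}-1}=\sum_{k\ge 0}2^{-(1+\alpha)k}$ in the trivial (measure-only) comparison $|Q_I|_\alpha\le \frac{2^{1+\alpha}}{2^{1+\alpha}-1}|T_I|_\alpha$. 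Combining the trivial comparison when $[\omega]_{\mathcal{B}_{p,\alpha}}$ contributes nothing (i.e. the raw geometric bound, $\le 2|T_I|_{\omega,\alpha}$ in the worst case after accounting for the weight via Hölder with exponent $p$) with the Békollè–Bonami self-improvement gives precisely the stated $C_{p,\alpha}=\max\{2,(\tfrac{2^{1+\alpha}}{2^{1+\alpha}-1})^{p}\}$, and I would finish by citing \cite[Lemma 2.1.]{CarnotBenoit} for the template in the case $\alpha=0$ and noting the estimate is uniform in $I\in\mathcal{I}$.
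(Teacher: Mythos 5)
Your core step is the right one and is exactly what the paper intends (the paper only records that the lemma is ``an easy consequence of H\"older's inequality'' and refers to \cite[Lemma 2.1.]{CarnotBenoit} for $\alpha=0$): write $|T_I|_\alpha=\int_{T_I}\omega^{1/p}\omega^{-1/p}dV_\alpha\le |T_I|_{\omega,\alpha}^{1/p}\bigl(\int_{Q_I}\omega^{1-p'}dV_\alpha\bigr)^{1/p'}$, bound the second factor by $[\omega]_{\mathcal{B}_{p,\alpha}}^{1/p}\,|Q_I|_\alpha\,|Q_I|_{\omega,\alpha}^{-1/p}$ using the $\mathcal{B}_{p,\alpha}$ condition on $Q_I$, and rearrange to get $|Q_I|_{\omega,\alpha}\le[\omega]_{\mathcal{B}_{p,\alpha}}\bigl(|Q_I|_\alpha/|T_I|_\alpha\bigr)^p|T_I|_{\omega,\alpha}$. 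That is the whole proof; your ``$k=0$ instance'' is all that is needed.

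The surrounding scaffolding, however, contains genuine errors and should be discarded. (i) The tent decomposition is not what you claim: if $|I^{(k+1)}|=\tfrac12|I^{(k)}|$ then $Q_{I^{(k)}}\setminus Q_{I^{(k+1)}}$ is an L-shaped region, not the upper half of $Q_{I^{(k)}}$, while the genuine upper halves $T_{I^{(k)}}$ form a staircase whose union is far from all of $Q_I$; fortunately none of this is used. (ii) The ratio is $|T_I|_\alpha/|Q_I|_\alpha=\bigl(\int_{|I|/2}^{|I|}y^\alpha dy\bigr)\big/\bigl(\int_0^{|I|}y^\alpha dy\bigr)=1-2^{-(1+\alpha)}$, not $1-2^{-(2+\alpha)}$: the horizontal factor $|I|$ is common to both sets, so only the vertical integral contributes, and this is precisely why the stated constant is $\bigl(\tfrac{2^{1+\alpha}}{2^{1+\alpha}-1}\bigr)^p=(1-2^{-(1+\alpha)})^{-p}$. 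Your value $1-2^{-(2+\alpha)}$ would produce a smaller (and not justified) constant and contradicts your own identification of $\tfrac{2^{1+\alpha}}{2^{1+\alpha}-1}$ as $|Q_I|_\alpha/|T_I|_\alpha$. (iii) There is no case distinction behind the $\max$ and no crude bound $|Q_I|_{\omega,\alpha}\le2|T_I|_{\omega,\alpha}$ for a general weight: since $[\omega]_{\mathcal{B}_{p,\alpha}}\ge1$, the inequality with constant $(1-2^{-(1+\alpha)})^{-p}$ trivially implies the one with $C_{p,\alpha}=\max\{2,(1-2^{-(1+\alpha)})^{-p}\}$; the $2$ is inserted only to guarantee $C_{p,\alpha}[\omega]_{\mathcal{B}_{p,\alpha}}\ge2$, which is what Lemma \ref{l4} needs so that $\theta\ge\tfrac12$.
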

As a consequence of the above lemma, we obtain the following reverse doubling property.
\begin{lemma}\label{l4}
Let $1<p<\infty$, and $\alpha>-1$. Let $I\subset \mathbb{R}$ be an interval, and denote by $B_I$ the lower half of the Carleson square $Q_I$. Assume that $\omega\in B_{p,\alpha}$. Then $$\frac{|B_I|_{\omega,\alpha}}{|Q_I|_{\omega,\alpha}}\le \theta$$ where $\theta = 1 - \frac{1}{C_{p,\alpha}[\omega]_{B_{p,\alpha}}}$, with $C_{p,\alpha}$ the constant in (\ref{eq:ineqweightsquare}).
\end{lemma}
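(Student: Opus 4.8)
The plan is to derive Lemma~\ref{l4} directly from Lemma~\ref{carnot} by a simple decomposition of the Carleson square. First I would observe that the Carleson square $Q_I$ splits (up to a set of measure zero) into its lower half $B_I$ and its upper half $T_I$, where $T_I=\{z=x+iy: x\in I,\ |I|/2<y<|I|\}$ and $B_I=\{z=x+iy:x\in I,\ 0<y<|I|/2\}$. These two pieces are disjoint, so by additivity of the measure $|\cdot|_{\omega,\alpha}$ we have
\begin{equation*}
|Q_I|_{\omega,\alpha}=|B_I|_{\omega,\alpha}+|T_I|_{\omega,\alpha}.
\end{equation*}
Hence controlling the ratio $|B_I|_{\omega,\alpha}/|Q_I|_{\omega,\alpha}$ is the same as bounding $|T_I|_{\omega,\alpha}/|Q_I|_{\omega,\alpha}$ from below.

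Next I would invoke Lemma~\ref{carnot}. Since (via the identity $[\omega^q]_{\mathcal{B}_{r,\alpha}}=[\omega]_{B_{p,q,\alpha}}^q$ noted in the introduction, or simply because for the one-parameter class $B_{p,\alpha}=\mathcal{B}_{p,\alpha}$) the hypothesis $\omega\in B_{p,\alpha}$ puts us exactly in the situation of Lemma~\ref{carnot}, inequality \eqref{eq:ineqweightsquare} gives
\begin{equation*}
|Q_I|_{\omega,\alpha}\le C_{p,\alpha}[\omega]_{B_{p,\alpha}}\,|T_I|_{\omega,\alpha},
\end{equation*}
that is, $|T_I|_{\omega,\alpha}\ge \frac{1}{C_{p,\alpha}[\omega]_{B_{p,\alpha}}}|Q_I|_{\omega,\alpha}$. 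Substituting $|T_I|_{\omega,\alpha}=|Q_I|_{\omega,\alpha}-|B_I|_{\omega,\alpha}$ and rearranging yields
\begin{equation*}
|B_I|_{\omega,\alpha}\le\left(1-\frac{1}{C_{p,\alpha}[\omega]_{B_{p,\alpha}}}\right)|Q_I|_{\omega,\alpha},
\end{equation*}
which is precisely the claimed bound with $\theta=1-\frac{1}{C_{p,\alpha}[\omega]_{B_{p,\alpha}}}$. Taking the supremum over all intervals $I$ is unnecessary since the estimate already holds for each fixed $I$ with a uniform constant.

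There is essentially no serious obstacle here: the only minor point requiring care is making sure the $B_{p,\alpha}$ notation in the statement is identified with the B\'ekoll\`e--Bonami class $\mathcal{B}_{p,\alpha}$ to which Lemma~\ref{carnot} applies, and checking that $\theta\in(0,1)$, which follows because $C_{p,\alpha}\ge 2>1$ and $[\omega]_{B_{p,\alpha}}\ge 1$ always, so $C_{p,\alpha}[\omega]_{B_{p,\alpha}}>1$ and thus $0<\theta<1$. This positivity of the gap is the content worth emphasizing, as it is exactly what makes the doubling ``reverse'': iterating over the dyadic subdivision of $Q_I$ one gets geometric decay of the weighted measure of the lower tiers, which is the property actually used later in the sparse-domination arguments for the weak-type estimates.
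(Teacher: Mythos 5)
Your proof is correct and is exactly the derivation the paper intends: the paper states Lemma~\ref{l4} without writing out a proof, merely as ``a consequence of'' Lemma~\ref{carnot}, and your argument --- splitting $|Q_I|_{\omega,\alpha}=|B_I|_{\omega,\alpha}+|T_I|_{\omega,\alpha}$ and applying \eqref{eq:ineqweightsquare} to bound $|T_I|_{\omega,\alpha}$ from below --- is the only natural way to fill that in. Your side remarks (that $B_{p,\alpha}$ in the statement is the class $\mathcal{B}_{p,\alpha}$, and that $C_{p,\alpha}[\omega]_{\mathcal{B}_{p,\alpha}}>1$ so $\theta\in(0,1)$) are also accurate and consistent with how the lemma is used later.
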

\subsection{Maximal functions and their boundedness}

Let $\alpha>-1$, $0\le \gamma<2+\alpha$, and let $\sigma$ be a weight. The weighted fractional maximal function $\mathcal{M}_{\sigma,\alpha,\gamma}$ is defined by
\begin{equation}\label{eq:maxfunctdef}
\mathcal{M}_{\sigma,\alpha,\gamma}f(z) :=  \sup_{\substack{I\subset\mathbb{R}\\z\in Q_I}}\frac{1}{|Q_I|_{\sigma ,\alpha}^{1-\frac{\gamma}{2+\alpha}}}\int_{Q_I}|f(w)|\sigma(w)dV_{\alpha}(w).
\end{equation}
When $\gamma=0$, the above operator is just the weighted Hardy-Littlewood maximal function denoted $\mathcal{M}_{\sigma,\alpha}$ and if moreover, $\sigma=1$, we simply write $\mathcal{M}_{\alpha}$. The unweighted fractional maximal function is just the operator $\mathcal{M}_{\alpha,\gamma}:=\mathcal{M}_{1,\alpha,\gamma}$.
\vskip .3cm
We consider the following system of dyadic grids,
$$\mathcal D^\beta:=\{2^j\left([0,1)+m+(-1)^j\beta\right):m\in \mathbb Z,\,\,\,j\in \mathbb Z \},\,\,\,\textrm{for}\,\,\,\beta\in \{0,1/3\}.$$
%For more on this system of dyadic grids and its applications, we refer to \cite{AlPottReg, HyPerez,Lerner,LerOmbroPerezetal,PR}. 
When $\beta=0$, we observe that $\mathcal D^0$ is the standard dyadic grid of $\mathbb R$, denoted $\mathcal D$.
\vskip .2cm
For any $\beta\in \{0,1/3\}$, we denote by $\mathcal{M}_{\sigma,\alpha,\gamma}^{d,\beta}$ the dyadic analogue of $\mathcal{M}_{\sigma,\alpha,\gamma}$, defined as in (\ref{eq:maxfunctdef}) but with the supremum taken over dyadic intervals in the grid $\mathcal D^\beta$.
\vskip .2cm
We have the following useful result.
\begin{lemma}\label{lem:weakfrac}
Let $1\le p\le q<\infty$, $\alpha>-1$ and $0\le \gamma<2+\alpha$. Let $\omega$ be a weight on $\mathbb{R}^2_+$, and let $\mu$ be a positive measure on $\mathbb{R}_+^2$. Then the following assertions are equivalent.
\begin{itemize}
\item[(a)] There is a constant $C_1>0$ such that for any $f\in L^p(\mathbb{R}^2_+, \omega dV_\alpha)$, and any $\lambda>0$,
\Be\label{eq:main21}
\mu(\{z\in \mathbb{R}^2_+: \mathcal{M}_{\alpha,\gamma}f(z)>\lambda\})\le \frac{C_1}{\lambda^q}\left(\int_{\mathbb{R}^2_+}|f(z)|^p\omega(z)dV_\alpha(z)\right)^{q/p}.
\Ee
\item[(b)] There is a constant $C_2>0$ such that for any $f\in L^p(\mathbb{R}^2_+, \omega dV_\alpha)$, for any $\beta\in \{0,\frac 13\}$, and any $\lambda>0$,
\Be\label{eq:main21}
\mu(\{z\in \mathbb{R}^2_+: \mathcal{M}_{\alpha,\gamma}^{d,\beta}f(z)>\lambda\})\le \frac{C_2}{\lambda^q}\left(\int_{\mathbb{R}^2_+}|f(z)|^p\omega(z)dV_\alpha(z)\right)^{q/p}.
\Ee
\item[(c)] There is a constant $C_3>0$ such that for any interval $I\subset \mathbb R$,
\Be\label{eq:main22}
|Q_I|_\alpha^{q(\frac{\gamma}{2+\alpha}-\frac{1}{p})}\left(\frac{1}{|Q_I|_\alpha}\int_{Q_I}\omega^{1-p'}(z)dV_\alpha(z)\right)^{q/p'}\mu(Q_I)\le C_3
\Ee
where $\left(\frac{1}{|Q_I|_\alpha}\int_{Q_I}\omega^{1-p'}(z)dV_\alpha(z)\right)^{1/p'}$ is understood as $\left(\inf_{Q_I}\omega\right)^{-1}$ when $p=1$.
\end{itemize}
\end{lemma}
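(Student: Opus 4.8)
The plan is to prove the cycle of implications $(b)\Rightarrow(a)\Rightarrow(c)\Rightarrow(b)$, which is the shortest route since $(b)\Rightarrow(a)$ will be essentially free. For $(b)\Rightarrow(a)$, I would use the three-grid covering principle: every interval $I\subset\mathbb R$ is contained in some dyadic interval $J\in\mathcal D^{\beta}$ for $\beta\in\{0,1/3\}$ with $|J|\le C|I|$ (hence $|Q_J|_\alpha\le C'|Q_I|_\alpha$), so that $\mathcal M_{\alpha,\gamma}f(z)\lesssim \mathcal M_{\alpha,\gamma}^{d,0}f(z)+\mathcal M_{\alpha,\gamma}^{d,1/3}f(z)$ pointwise. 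Splitting the level set $\{\mathcal M_{\alpha,\gamma}f>\lambda\}$ accordingly and applying (b) to each grid at level $c\lambda$ gives (a) with $C_1$ a constant multiple of $C_2$.

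For $(a)\Rightarrow(c)$, the standard device is to test the weak inequality on an extremal function. Fix an interval $I$ and set $f = \omega^{1-p'}\mathbf 1_{Q_I}$ (when $p=1$ one instead takes $f=\mathbf 1_{Q_I}$ and reads the $\omega^{1-p'}$-average as $(\inf_{Q_I}\omega)^{-1}$, as instructed). For any $z\in Q_I$ one has, directly from the definition \eqref{eq:maxfunctdef} of $\mathcal M_{\alpha,\gamma}$ with the interval $I$ itself,
\[
\mathcal M_{\alpha,\gamma}f(z)\ \ge\ \frac{1}{|Q_I|_\alpha^{1-\frac{\gamma}{2+\alpha}}}\int_{Q_I}\omega(w)^{1-p'}dV_\alpha(w)\ =:\ \lambda_I .
\]
Thus $Q_I\subset\{\mathcal M_{\alpha,\gamma}f>\lambda_I/2\}$, and feeding this into \eqref{eq:main21} at level $\lambda=\lambda_I/2$ yields
\[
\mu(Q_I)\ \le\ \frac{C_1\, 2^q}{\lambda_I^{\,q}}\left(\int_{Q_I}\omega(w)^{(1-p')p}\,\omega(w)\,dV_\alpha(w)\right)^{q/p}
=\ \frac{C_1\,2^q}{\lambda_I^{\,q}}\left(\int_{Q_I}\omega(w)^{1-p'}dV_\alpha(w)\right)^{q/p},
\]
using $(1-p')p+1=1-p'$. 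Substituting the value of $\lambda_I$ and rearranging the powers of $|Q_I|_\alpha$ and of $\int_{Q_I}\omega^{1-p'}dV_\alpha$ gives exactly \eqref{eq:main22} with $C_3 = C_1 2^q$; one checks the exponent bookkeeping $q/p-q = q(1/p-1) $ combines with the $q(1-\gamma/(2+\alpha))$ power from $\lambda_I$ to produce $q(\gamma/(2+\alpha)-1/p)$ on $|Q_I|_\alpha$ together with the claimed $q/p'$ power on the $\omega^{1-p'}$-average (note $q/p-q/p\cdot p'(p-1)$-type algebra, i.e. $q-q/p = q/p'$). The $p=1$ case is the same computation with the essential-infimum reading.

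The real work is $(c)\Rightarrow(b)$, and this is the step I expect to be the main obstacle. Fix $\beta$ and $\lambda>0$. By a standard stopping-time (Calderón--Zygmund) selection in the grid $\mathcal D^\beta$, write the open set $\{\mathcal M_{\alpha,\gamma}^{d,\beta}f>\lambda\}$ as a union of maximal dyadic Carleson squares $\{Q_{I_j}\}$ on each of which the averaging functional exceeds $\lambda$:
\[
\lambda\ <\ \frac{1}{|Q_{I_j}|_\alpha^{1-\frac{\gamma}{2+\alpha}}}\int_{Q_{I_j}}|f|\,\omega^{0}\,dV_\alpha
\qquad\Big(\text{here }\sigma=1\text{, so }\mathcal M_{\alpha,\gamma}=\mathcal M_{1,\alpha,\gamma}\Big).
\]
Then $\mu(\{\mathcal M_{\alpha,\gamma}^{d,\beta}f>\lambda\})\le\sum_j\mu(Q_{I_j})$, and the task is to bound each $\mu(Q_{I_j})$ using \eqref{eq:main22}. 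Apply Hölder's inequality on $Q_{I_j}$ with exponents $p,p'$ against the weight $\omega$:
\[
\int_{Q_{I_j}}|f|\,dV_\alpha\ \le\ \left(\int_{Q_{I_j}}|f|^p\omega\,dV_\alpha\right)^{1/p}\left(\int_{Q_{I_j}}\omega^{1-p'}dV_\alpha\right)^{1/p'},
\]
so from the stopping inequality,
\[
\lambda^p\, |Q_{I_j}|_\alpha^{p(1-\frac{\gamma}{2+\alpha})}\ \le\ \left(\int_{Q_{I_j}}|f|^p\omega\,dV_\alpha\right)\left(\int_{Q_{I_j}}\omega^{1-p'}dV_\alpha\right)^{p/p'}.
\]
Raising to the power $q/p$ and invoking \eqref{eq:main22} in the form
$\bigl(\int_{Q_{I_j}}\omega^{1-p'}dV_\alpha\bigr)^{q/p'}\le C_3\,|Q_{I_j}|_\alpha^{\,q/p'}\,|Q_{I_j}|_\alpha^{\,q(\frac1p-\frac{\gamma}{2+\alpha})}/\mu(Q_{I_j})$, and then cancelling the matching powers of $|Q_{I_j}|_\alpha$ (this is where the precise exponent identities pay off), gives
\[
\mu(Q_{I_j})\ \le\ \frac{C_3}{\lambda^q}\left(\int_{Q_{I_j}}|f|^p\omega\,dV_\alpha\right)^{q/p}.
\]
Finally, since the maximal cubes $Q_{I_j}$ are pairwise disjoint and $q/p\ge1$, I sum over $j$ using the elementary inequality $\sum_j a_j^{q/p}\le\bigl(\sum_j a_j\bigr)^{q/p}$ for nonnegative $a_j$ (with $a_j=\int_{Q_{I_j}}|f|^p\omega\,dV_\alpha$), obtaining
\[
\mu(\{\mathcal M_{\alpha,\gamma}^{d,\beta}f>\lambda\})\ \le\ \frac{C_3}{\lambda^q}\left(\int_{\mathbb R^2_+}|f|^p\omega\,dV_\alpha\right)^{q/p},
\]
which is \eqref{eq:main21} with $C_2=C_3$. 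The subtle points to watch are: the disjointness of the selected Carleson squares in a single dyadic grid (maximality guarantees this), the correct reading of all exponents when $p=1$ (where $p'=\infty$ and the $\omega^{1-p'}$-integral is replaced by $|Q_I|_\alpha(\inf_{Q_I}\omega)^{-1}$, turning Hölder into the trivial bound $\int_{Q_{I_j}}|f|\,dV_\alpha\le(\inf_{Q_{I_j}}\omega)^{-1}\int_{Q_{I_j}}|f|\omega\,dV_\alpha$), and keeping the constants independent of $\beta$ so that the passage back through $(b)\Rightarrow(a)$ loses only a dimensional factor.
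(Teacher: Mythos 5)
Your argument is correct, and it is a faithful reconstruction of the proof that the paper does not actually write out: the paper disposes of the lemma by citing \cite{Sehba3} (Theorem 2.3 there for the equivalence of the weak inequality with the testing condition, and Lemma 4.1 there for the passage between the dyadic and non-dyadic maximal functions via the two-grid covering, i.e.\ part (1) of Lemma \ref{lem:dyadiccover}). Your cycle $(b)\Rightarrow(a)\Rightarrow(c)\Rightarrow(b)$ is exactly the standard mechanism behind those citations, and the exponent bookkeeping checks out: in $(a)\Rightarrow(c)$ the identity $q/p-q=-q/p'$ together with $q(\tfrac{\gamma}{2+\alpha}-\tfrac1p)-\tfrac{q}{p'}=-q(1-\tfrac{\gamma}{2+\alpha})$ turns the tested weak inequality into precisely \eqref{eq:main22}, and in $(c)\Rightarrow(b)$ the same identity lets the factor $|Q_{I_j}|_\alpha^{q(1-\gamma/(2+\alpha))}$ cancel on both sides before the disjoint sum with $q/p\ge 1$. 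The only difference from the paper is that you prove $(b)\Rightarrow(a)$ and get $(a)\Rightarrow(b)$ for free via the cycle, whereas the paper notes $(a)\Rightarrow(b)$ is trivial (the dyadic supremum is a sub-supremum) and proves $(b)\Rightarrow(a)$; both are fine. Two small technicalities you should make explicit in a final write-up: in $(a)\Rightarrow(c)$ the test function $\omega^{1-p'}\mathbf 1_{Q_I}$ need not lie in $L^p(\omega\,dV_\alpha)$ a priori, so one tests with $(\omega+\epsilon)^{1-p'}\mathbf 1_{Q_I}$ (or a truncation) and passes to the limit; and in $(c)\Rightarrow(b)$ the existence of \emph{maximal} stopping intervals requires that the averages $|Q_I|_\alpha^{\gamma/(2+\alpha)-1}\int_{Q_I}|f|\,dV_\alpha$ tend to $0$ as $|I|\to\infty$, which one gets by first reducing to bounded, compactly supported $f$ (using $\gamma<2+\alpha$) and then invoking monotone convergence.
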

\begin{proof}
The equivalence (a)$\Leftrightarrow$(c) is from \cite[Theorem 2.3]{Sehba3}. Clearly, (a)$\Rightarrow$(b). That (b)$\Rightarrow$(a) follows from \cite[Lemma 4.1]{Sehba3} and is the main idea in the proof of \cite[Theorem 2.3]{Sehba3}. 
\end{proof}
\vskip .2cm
We refer to \cite[Corollary 4.3]{Sehba3} for the following.
\begin{lemma}\label{fracmax}
Let $\alpha>-1$, $0\le \gamma<2+\alpha$, and let $\sigma$ be a weight. Let $1<p<\frac{2+\alpha}{\gamma}$, and define $q$ by $\frac{1}{q}=\frac{1}{p}-\frac{\gamma}{2+\alpha}$. Then there exists a constant $C=C(p,\alpha,\gamma)$ such that for any $\beta\in \{0,\frac 13\}$,
\begin{equation}
\left(\int_{\mathbb{R}_+^2}\left((\mathcal{M}_{\sigma,\alpha,\gamma}^{d,\beta}f)(z)\right)^q\sigma(z)dV_\alpha(z)\right)^{1/q}\le C\left(\int_{\mathbb{R}_+^2}|f(z)|^p\sigma(z)dV_\alpha(z)\right)^{1/p}.
\end{equation}
\end{lemma}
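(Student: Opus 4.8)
The plan is to deduce the estimate from the $L^p$-boundedness of a weighted dyadic Hardy--Littlewood maximal operator, exactly along the lines of the classical proof that the Euclidean fractional maximal operator maps $L^p$ to $L^q$. Fix $\beta\in\{0,\frac13\}$, write $\delta:=\frac{\gamma}{2+\alpha}$, and regard $\mu:=\sigma\,dV_\alpha$ as a locally finite measure on $\mathbb{R}_+^2$, so that $\mu(Q_I)=|Q_I|_{\sigma,\alpha}$ and
$$\mathcal{M}_{\sigma,\alpha,\gamma}^{d,\beta}f(z)=\sup_{\substack{I\in\mathcal D^\beta\\ z\in Q_I}}\mu(Q_I)^{\delta-1}\int_{Q_I}|f|\,d\mu,\qquad
\mathcal{M}_{\sigma,\alpha}^{d,\beta}f(z)=\sup_{\substack{I\in\mathcal D^\beta\\ z\in Q_I}}\frac{1}{\mu(Q_I)}\int_{Q_I}|f|\,d\mu.$$
Here $0\le\delta<1$ and $p\delta<1$ because $p<\frac{2+\alpha}{\gamma}$, and the exponent relation reads $\frac1q=\frac1p-\delta$; in particular $q=\frac{p}{1-p\delta}$ and $1-p\delta=\frac pq$. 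We may assume $\|f\|_{p,\sigma,\alpha}<\infty$, and, since the case $\gamma=0$ is just the ordinary maximal bound (then $q=p$), that $\gamma>0$.

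The first step is a pointwise domination. For any Carleson square $Q_I$ with $I\in\mathcal D^\beta$ and $z\in Q_I$, Hölder's inequality gives $\frac{1}{\mu(Q_I)}\int_{Q_I}|f|\,d\mu\le\mu(Q_I)^{-1/p}\|f\|_{p,\sigma,\alpha}$. Factoring
$$\mu(Q_I)^{\delta-1}\int_{Q_I}|f|\,d\mu=\mu(Q_I)^{\delta}\Bigl(\tfrac{1}{\mu(Q_I)}\int_{Q_I}|f|\,d\mu\Bigr)^{1-p\delta}\Bigl(\tfrac{1}{\mu(Q_I)}\int_{Q_I}|f|\,d\mu\Bigr)^{p\delta},$$
bounding the first average factor by $\mathcal{M}_{\sigma,\alpha}^{d,\beta}f(z)$ and the second by $\mu(Q_I)^{-1/p}\|f\|_{p,\sigma,\alpha}$, all powers of $\mu(Q_I)$ cancel; taking the supremum over the squares $Q_I\ni z$ yields
$$\mathcal{M}_{\sigma,\alpha,\gamma}^{d,\beta}f(z)\le\bigl(\mathcal{M}_{\sigma,\alpha}^{d,\beta}f(z)\bigr)^{1-p\delta}\,\|f\|_{p,\sigma,\alpha}^{\,p\delta},\qquad z\in\mathbb{R}_+^2.$$

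The second step is the Hardy--Littlewood bound for $\mathcal{M}_{\sigma,\alpha}^{d,\beta}$. Since $\mathcal D^\beta$ is a dyadic grid, any two of its intervals are nested or have disjoint interiors, and $I\subseteq I'$ forces $Q_I\subseteq Q_{I'}$; hence $\{Q_I:I\in\mathcal D^\beta\}$ is a tree of sets and $\mathcal{M}_{\sigma,\alpha}^{d,\beta}$ is a dyadic maximal operator for the measure $\mu$. By the maximal-cube stopping-time argument it is of weak type $(1,1)$ for $\mu$ with constant $1$, hence, being trivially bounded on $L^\infty(\mu)$, bounded on $L^p(\mathbb{R}_+^2,\sigma\,dV_\alpha)$ with a constant $C_p$ depending only on $p$ --- and not on $\sigma$. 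Finally, raising the pointwise bound to the power $q$, integrating against $\mu$, and using $(1-p\delta)q=p$ together with $p\delta q+p=q$ (both being restatements of $\frac1q=\frac1p-\delta$),
$$\int_{\mathbb{R}_+^2}\bigl(\mathcal{M}_{\sigma,\alpha,\gamma}^{d,\beta}f\bigr)^q\sigma\,dV_\alpha\le\|f\|_{p,\sigma,\alpha}^{\,p\delta q}\int_{\mathbb{R}_+^2}\bigl(\mathcal{M}_{\sigma,\alpha}^{d,\beta}f\bigr)^p\sigma\,dV_\alpha\le C_p^{\,p}\,\|f\|_{p,\sigma,\alpha}^{\,p\delta q+p}=C_p^{\,p}\,\|f\|_{p,\sigma,\alpha}^{\,q},$$
and taking $q$-th roots proves the lemma with $C=C_p^{\,p/q}$.

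I do not expect a serious obstacle. The two points that need attention are: phrasing the auxiliary maximal inequality as a statement about the arbitrary measure $\mu=\sigma\,dV_\alpha$ on the tree $\{Q_I\}$, so that the resulting constant is genuinely independent of the weight $\sigma$; and the (entirely mechanical) bookkeeping of exponents in the two displays above, which is forced by the scaling identity $\frac1q=\frac1p-\frac{\gamma}{2+\alpha}$.
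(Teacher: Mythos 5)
Your argument is correct. Note first that the paper does not actually prove this lemma: it is imported verbatim from \cite[Corollary 4.3]{Sehba3}, so there is no in-paper proof to compare against. What you give is a correct, self-contained proof by the standard two-step route for fractional maximal operators: (i) the Hedberg/Welland-type pointwise bound $\mathcal{M}_{\sigma,\alpha,\gamma}^{d,\beta}f\le(\mathcal{M}_{\sigma,\alpha}^{d,\beta}f)^{1-p\delta}\|f\|_{p,\sigma,\alpha}^{p\delta}$ obtained by splitting the average and applying H\"older to one factor, and (ii) the universal $L^p(\mu)$ bound, with constant depending only on $p$, for the dyadic maximal operator associated with the arbitrary locally finite measure $\mu=\sigma\,dV_\alpha$; your exponent bookkeeping ($(1-p\delta)q=p$ and $p\delta q+p=q$) is right. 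The key structural observation making step (ii) legitimate --- that Carleson squares over a single grid $\mathcal{D}^\beta$ inherit the nested-or-disjoint property from the underlying dyadic intervals --- is correctly identified. Two small points deserve a sentence each in a polished write-up: the selection of \emph{maximal} squares in the weak $(1,1)$ argument should be justified by first restricting to squares of side length at most $2^N$ and letting $N\to\infty$ (for a general measure $\mu$ the averages need not decay along an increasing chain of squares); and the squares with $\mu(Q_I)\in\{0,\infty\}$ should be excluded from the supremum by the usual convention, since your factorization divides by $\mu(Q_I)$. Neither affects the validity of the proof.
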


\subsection{Dyadic analogue of fractional Bergman operators}
Let $\alpha>-1$ and $0\le \gamma<2+\alpha$. For $\beta\in \{0,1/3\}$, we introduce the following dyadic operators \Be\label{eq:dyadicoperator}\mathcal{Q}_{\alpha,\gamma}^{\beta}f = \sum_{I\in\mathcal{D}^{\beta}}|Q_I|_{\alpha}^{\frac{\gamma}{2+\alpha}}\langle f,\frac{1_{Q_I}}{|Q_I|_{\alpha}}\rangle_{\alpha}1_{Q_I}.\Ee Here, $\langle\cdot,\cdot\rangle_\alpha$ stands for the duality pairing $$\langle f,g \rangle_{\alpha}=\int_{\mathbb{R}^2_+}f(z)\overline{g(z)}\mathrm{d}V_{\alpha}(z).$$
The operators $\mathcal{Q}_{\alpha,\gamma}^{\beta}$ were introduced in \cite{PR} in the case $\gamma=0$ in relation with sharp estimate of the Bergman projection. The following result is obtained as in the case $\gamma=0$ (see \cite[Proposition 3.4]{PR}).
\begin{lemma}\label{lem:sparsedom}
Let $\alpha>0$ and $0\le \gamma<2+\alpha$. Then exists a constant $C=C_{\alpha,\gamma}>0$ such that for all $f\in L_{loc}^1(\mathbb{R}_+^2,dV_\alpha)$, $f\ge 0$ and $z\in \mathbb{R}_+^2$,
\Be\label{eq:sparsedom}
T_{\alpha,\gamma}f(z)\le C\sum_{\beta\in\{0,\frac{1}{3}\}}\mathcal{Q}_{\alpha,\gamma}^{\beta}f(z).
\Ee
\end{lemma}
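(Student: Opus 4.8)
The plan is to reduce the kernel $|z-\overline w|^{-(2+\alpha-\gamma)}$ to a sum over dyadic Carleson boxes, exactly as in the case $\gamma=0$ treated in \cite{PR}. First I would fix $z=x+iy\in\mathbb{R}_+^2$ and $w=u+iv\in\mathbb{R}_+^2$, and recall the elementary two-sided estimate $|z-\overline w|\backsimeq |x-u|+y+v$, so that $|z-\overline w|^{2+\alpha-\gamma}\backsimeq (|x-u|+y+v)^{2+\alpha-\gamma}$. The right-hand side is essentially constant on a dyadic ``annular'' region: if $I\in\mathcal D^\beta$ is the smallest dyadic interval in the grid $\mathcal D^\beta$ containing $x$ with $|I|\ge y$, then for $w\in Q_I$ one has $|x-u|+y+v\lesssim |I|$, while a standard one-third-trick argument shows that for \emph{every} $w$ there is a choice of $\beta\in\{0,\tfrac13\}$ and a dyadic $I\in\mathcal D^\beta$ with $z,w\in Q_I$ and $|I|\backsimeq |x-u|+y+v$. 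This is the point where the two shifted grids $\mathcal D^0$ and $\mathcal D^{1/3}$ are needed: one grid alone cannot capture all relative positions of $x$ and $u$, but the pair does (this is the classical observation that every interval is contained in a comparable dyadic interval of one of the two grids).

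Next I would organize the integral $T_{\alpha,\gamma}f(z)=\int_{\mathbb{R}_+^2}\frac{f(w)}{|z-\overline w|^{2+\alpha-\gamma}}\,dV_\alpha(w)$ by decomposing $\mathbb{R}_+^2$ into the sets $E_I^\beta=\{w: I \text{ is the minimal dyadic interval in }\mathcal D^\beta\text{ with }z,w\in Q_I\}$, or more simply by noting
\[
\frac{1}{|z-\overline w|^{2+\alpha-\gamma}}\lesssim \sum_{\beta\in\{0,\frac13\}}\ \sum_{\substack{I\in\mathcal D^\beta\\ z,w\in Q_I}}\frac{1_{Q_I}(w)}{|Q_I|_\alpha^{1-\frac{\gamma}{2+\alpha}}},
\]
which follows by summing the geometric-type series in the scales $|I|\ge |x-u|+y+v$ using $|Q_I|_\alpha\backsimeq |I|^{2+\alpha}$ and the constraint $2+\alpha-\gamma>0$ (so that $(1-\tfrac{\gamma}{2+\alpha})(2+\alpha)=2+\alpha-\gamma$ and the powers of $|I|$ match). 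Integrating this pointwise kernel bound against $f(w)\,dV_\alpha(w)$ and interchanging sum and integral gives
\[
T_{\alpha,\gamma}f(z)\lesssim \sum_{\beta\in\{0,\frac13\}}\ \sum_{\substack{I\in\mathcal D^\beta\\ z\in Q_I}}\frac{1}{|Q_I|_\alpha^{1-\frac{\gamma}{2+\alpha}}}\int_{Q_I}f(w)\,dV_\alpha(w)
=\sum_{\beta\in\{0,\frac13\}}\ \sum_{I\in\mathcal D^\beta}|Q_I|_\alpha^{\frac{\gamma}{2+\alpha}}\Big\langle f,\frac{1_{Q_I}}{|Q_I|_\alpha}\Big\rangle_\alpha 1_{Q_I}(z),
\]
which is exactly $C\sum_{\beta}\mathcal Q_{\alpha,\gamma}^\beta f(z)$.

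The main obstacle is the careful bookkeeping in the summation over scales: one must verify that the series $\sum_{I: z,w\in Q_I}|Q_I|_\alpha^{-(1-\frac{\gamma}{2+\alpha})}$, restricted to $I$ in a fixed grid with $|I|\gtrsim |x-u|+y+v=:d$, is comparable to $d^{-(2+\alpha-\gamma)}\backsimeq |z-\overline w|^{-(2+\alpha-\gamma)}$ — this is where strict positivity of the exponent $2+\alpha-\gamma$, equivalently the hypothesis $\gamma<2+\alpha$, is essential for convergence of the geometric series, and where the lower bound $|z-\overline w|\gtrsim d$ ensures the sum is not overcounted. The remaining subtlety is the one-third-trick: showing that for the given $z,w$ at least one of the two grids produces a dyadic interval $I$ with $z,w\in Q_I$ \emph{and} $|I|\backsimeq d$ (not merely $|I|\gtrsim d$), so that no more than finitely many scales below $d$ are lost; this is handled precisely as in \cite[Proposition 3.4]{PR}. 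I would remark that the hypothesis $\alpha>0$ in the statement plays no role in the argument beyond what is already covered by $\alpha>-1$, and the proof goes through verbatim from the $\gamma=0$ case once the kernel comparison above is in place.
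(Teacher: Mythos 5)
Your argument is correct and is essentially the proof the paper has in mind: the paper gives no details and simply defers to \cite[Proposition 3.4]{PR}, whose argument is exactly the one you reproduce (kernel comparison $|z-\overline w|\backsimeq|x-u|+y+v$, the two shifted grids to produce a single dyadic $J$ with $z,w\in Q_J$ and $|J|\backsimeq|x-u|+y+v$, then Tonelli). One small clarification: for the one-sided bound \eqref{eq:sparsedom} no geometric-series summation is needed — it suffices that the \emph{single} term indexed by that comparable $J$ already dominates the kernel, the series estimate being relevant only for the (unneeded) reverse inequality; also, as you note, the hypothesis should read $\alpha>-1$.
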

We also recall the following covering results. The first one is \cite[Lemma 3.1]{PR} while the second one is \cite[Lemma 2.3]{CarnotBenoit}.
\begin{lemma}\label{lem:dyadiccover}
Let $I$ be any interval in $\mathbb{R}$. Then the following hold.
\begin{itemize}
\item[(1)] There exists a dyadic interval $J\in \mathcal{D}^\beta$ for some $\beta\in \{0,\frac 13\}$ such that $I\subseteq J$ and $|J|\le 8|I|$.
\item[(2)] For any $\beta\in \{0,\frac 13\}$, $I$ can be covered by two adjacent intervals $I_1$ and $I_2$ in $\mathcal{D}^\beta$ such that $|I|<|I_1|=|I_2|\leq 2|I|$.
\end{itemize}
\end{lemma}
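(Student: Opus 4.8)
The plan is to prove both parts by the classical ``one-third trick'' for the two adjacent dyadic systems $\mathcal{D}^0$ and $\mathcal{D}^{1/3}$: fix the generation (scale) according to the length $|I|$, and then analyze how $I$ sits relative to the endpoints of the dyadic intervals of that generation. The only genuinely structural fact needed is that shifting by $1/3$ (rather than by $1/2$) moves the endpoints far enough to absorb any interval that straddles an endpoint of the unshifted grid.

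For part (2), fix $\beta\in\{0,\tfrac13\}$ and choose the unique integer $j$ with $2^{j-1}\le |I|<2^j$, so that the intervals of $\mathcal{D}^\beta$ of generation $j$ have common length $2^j$ satisfying $|I|<2^j\le 2|I|$. Since $|I|$ is strictly less than the length of a single generation-$j$ interval, and consecutive endpoints of $\mathcal{D}^\beta$ at this generation are $2^j>|I|$ apart, $I$ can cross at most one such endpoint; hence $I$ meets at most two consecutive intervals of $\mathcal{D}^\beta$. Calling them $I_1,I_2$ gives $I\subseteq I_1\cup I_2$ with $|I_1|=|I_2|=2^j\in(|I|,2|I|]$, which is exactly the claim (if $I$ already lies inside one interval, adjoin either neighbour to produce the required adjacent pair).

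For part (1), I would set $\ell=|I|$, pick $k$ with $2^{k-1}\le \ell<2^k$, and work at generation $j=k+2$, where the intervals of either grid have length $2^j$ satisfying $4\ell<2^j\le 8\ell$; this is where the loose constant $8$ originates, and it is also what makes the trick run, since then $|I|<2^{j-2}=2^j/4$. The dichotomy is: either $I$ crosses no endpoint of $\mathcal{D}^0$ at generation $j$, in which case $I$ already lies in a single interval of $\mathcal{D}^0$; or $I$ straddles exactly one endpoint $x_0=2^j m$ (it cannot cross two, as consecutive endpoints are $2^j>|I|$ apart), in which case I would pass to $\mathcal{D}^{1/3}$.

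The key point, and essentially the only place requiring care, is to verify that in $\mathcal{D}^{1/3}$ the straddled point $x_0$ lies strictly inside a single generation-$j$ interval with room to spare on both sides. A direct computation shows that at every generation $j$ each endpoint $2^j m$ of $\mathcal{D}^0$ is at distance exactly $2^j/3$ from the nearest endpoint of $\mathcal{D}^{1/3}$; the alternating sign $(-1)^j$ only swaps which side the nearer endpoint lies on, leaving the distance $2^j/3$ unchanged. Thus the generation-$j$ interval $J\in\mathcal{D}^{1/3}$ containing $x_0$ extends at least $2^j/3$ to each side of $x_0$, and since every point of $I$ is within $|I|<2^j/4<2^j/3$ of $x_0$, all of $I$ lies in $J$. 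In both branches $|J|=2^j\le 8|I|$, which finishes the proof. I expect this $1/3$-offset verification, i.e. confirming that $2^j/4<2^j/3$ forces $I$ off every shifted endpoint, to be the main (indeed the only nontrivial) obstacle; everything else is bookkeeping about which generation to select.
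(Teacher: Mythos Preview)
Your argument is correct; both parts are handled cleanly via the standard ``one-third trick,'' and the computations (in particular the $2^j/3$ clearance of any standard-grid endpoint from the shifted-grid endpoints, and the inequality $|I|<2^j/4<2^j/3$) are accurate.

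Note, however, that the paper does not give a proof of this lemma at all: it simply cites \cite[Lemma 3.1]{PR} for part (1) and \cite[Lemma 2.3]{CarnotBenoit} for part (2). Your write-up is thus more, not less, than what the paper provides, and it is exactly the argument one finds in those references.
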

\vskip .2cm
\begin{remark}\label{rem:remarq1}
As the operators considered here are positive, we only need to consider positive functions in our proofs. Also from Lemma \ref{lem:sparsedom}, it follows that to prove the norm inequalities for $T_{\alpha,\gamma}$, it suffices to prove them for the positive dyadic operators $\mathcal{Q}_{\alpha,\gamma}^{\beta}$. Let us note that it is also enough to prove the norm inequalities for bounded and compactly supported functions as the general case will follow from Fatou's lemma. 
%Moreover, from Lemma \ref{lem:dyadiccover} and (\ref{eq:dyadicoperator}), we can further suppose that our functions are supported on dyadic Carleson squares.
\end{remark}
\section{Proof of Theorem \ref{thm:main3} and Theorem \ref{thm:main4}}

\begin{proof}[Proof of Theorem \ref{thm:main3}]
Assume that $\omega\in B_{1,q,\alpha}.$ Following Remark \ref{rem:remarq1} one only needs to show that the estimate \eqref{eq1} holds with $T_{\alpha,\gamma}$ replaced by $\mathcal{Q}_{\alpha,\gamma}^{\beta}.$ From the same remark, the definition (\ref{eq:dyadicoperator}) and Lemma \ref{lem:dyadiccover}, we can assume that $f$ is supported on some dyadic square $Q_J,\,\, J\in\mathcal{D}^{\beta}$, as we also prove that the estimate obtained is independent of $J$. 

Put for $\lambda>0,$ $$\mathbb{E}_{\lambda}:=\{z\in\mathbb{R}_+^2:\mathcal{Q}_{\alpha,\gamma}^{\beta}f(z)>\lambda\}.$$ Let $\Lambda > 0$ be fixed. We first check that $$\sup_{0<\lambda<\Lambda}\lambda^q|\mathbb{E}_{\lambda}|_{u,\alpha}<\infty,\quad u=\omega^q.$$ Indeed if $z\notin Q_J,$ then $\mathcal{Q}_{\alpha,\gamma}^{\beta}f(z)\ne 0$ only if we can find $I\in\mathcal{D}^{\beta}$ such that $J\subset I$ and $z\in Q_I.$ Let $I_0$ be the smallest $I$ such that this holds. Then for any other $I\in\mathcal{D}^\beta$ such that $J\subset I$ and $z\in Q_I,$ we have that $Q_{I_0}\subseteq Q_I$ and so, $|Q_I|_\alpha= 2^{(2+\alpha)k}|Q_{I_0}|_{\alpha}$ for some integer $k>0.$ Moreover,  $$\int_{Q_I}fdV_{\alpha} = \int_{Q_{I_0}}fdV_{\alpha}.$$ It follows that 
\begin{eqnarray*}
\mathcal{Q}_{\alpha,\gamma}^{\beta}f(z) & = & \sum_{J\subset I}|Q_I|^{\frac{\gamma}{2+\alpha} - 1}_{\alpha}\int_{Q_I}fdV_{\alpha}\\
				& = & \left(\sum_{k=0}^{\infty}2^{(1+\alpha)k\left(\frac{\gamma}{2+\alpha} -1\right)}\right)|Q_{I_0}|^{\frac{\gamma}{2+\alpha}-1}_{\alpha}\int_{Q_{I_0}}fdV_{\alpha}\\
				& \le & C\mathcal{M}^{d,\beta}_{\alpha,\gamma}f(z).
\end{eqnarray*}
%where $\mathcal{M}^{d,\beta}_{\alpha,\gamma}$ is the dyadic analogue of $\mathcal{M}_{\alpha,\gamma}$ which is defined as $\mathcal{M}_{\alpha,\gamma}$ but with the supremum taken only over dyadic intervals in $\mathcal{D}^\beta$.
Hence putting $$\mathbb{F}_{\lambda}:=\left\{ z\in\mathbb{R}^2_+ : \mathcal{M}^{d,\beta}_{\alpha,\gamma}f(z) > \frac{\lambda}{C}\right\},$$ we obtain 
\begin{eqnarray*}
\sup_{0<\lambda<\Lambda}\lambda^q|\mathbb{E}_{\lambda}|_{u,\alpha} & \le & \Lambda^q |Q_J|_{u,\alpha}+ \sup_{0<\lambda <\Lambda}\lambda^q\left|\left\{z\in\mathbb{R}_+^2\setminus Q_J : \mathcal{M}^{d,\beta}_{\alpha,\gamma}f(z)>\frac{\lambda}{C}\right\}\right|_{u,\alpha}\\
& \le & \Lambda^q |Q_J|_{u,\alpha}+ \sup_{0<\lambda <\Lambda}\lambda^q|\mathbb{F}_{\lambda}|_{u,\alpha}< \infty
\end{eqnarray*}
since $u$ is locally integrable and $\mathcal{M}_{\alpha,\gamma}^{d,\beta}$ is bounded from $L^1(u\mathrm{d}V_{\alpha})$ to $L^{q,\infty}(u\mathrm{d}V_{\alpha})$ for $u$ satisfying $$\sup_I\mbox{ess}\sup_{I\in Q_I}\left(\frac{1}{|Q_I|_{\alpha}}\int_{Q_I}u\mathrm{d}V_{\alpha}\right)^{\frac{1}{q}}\omega^{-1}(z) <\infty$$ (see Lemma \ref{lem:weakfrac}). 

Next, we observe that $$\mathcal{Q}_{\alpha,\gamma}^{\beta}f(z) = \mathcal{Q}_{\alpha,\gamma,J}^{\beta,\,\,\mbox{in}}f(z) + \mathcal{Q}_{\alpha,\gamma,J}^{\beta,\,\,\mbox{out}}f(z)$$ where $$\mathcal{Q}_{\alpha,\gamma,J}^{\beta,\,\,\mbox{in}}f(z) = \sum_{\substack{I\in\mathcal{D}^{\beta}\\I\subseteq J}}|Q_I|_{\alpha}^{\frac{\gamma}{2+\alpha}}\left(\frac{1}{|Q_I|_{\alpha}}\int_{Q_I}f\mathrm{d}V_{\alpha}\right)1_{Q_I}(z)$$ and $$\mathcal{Q}_{\alpha,\gamma,J}^{\beta,\,\,\mbox{out}}f(z) = \sum_{\substack{I\in\mathcal{D}^{\beta}\\I\supseteq J}}|Q_I|_{\alpha}^{\frac{\gamma}{2+\alpha}}\left(\frac{1}{|Q_I|_{\alpha}}\int_{Q_I}f\mathrm{d}V_{\alpha}\right)1_{Q_I}(z).$$ We recall that $\mathbb{E}_{\lambda}$ can be written as a union of maximal dyadic Carleson squares. Indeed, if $z\in\mathbb{E}_{\lambda},$ denote by $Q(z)$ the smallest dyadic square containing $z.$ Let $w \in Q(z).$ Then any dyadic square supported by an interval in $\mathcal{D}^\beta$ containing $z$ contains $Q(z)$ and hence contains $w$. Thus 
\begin{eqnarray*}
\lambda < \mathcal{Q}_{\alpha,\gamma}^{\beta}f(z) & = & \sum_{\substack{I\in\mathcal{D}^{\beta}\\z\in Q_I}}|Q_I|_{\alpha}^{\frac{\gamma}{2+\alpha}}\langle f,\frac{1_{Q_I}}{|Q_I|_{\alpha}}\rangle_{\alpha}\\
& \le & \sum_{\substack{I\in\mathcal{D}^{\beta}\\w\in Q_I}}|Q_I|_{\alpha}^{\frac{\gamma}{2+\alpha}}\langle f,\frac{1_{Q_I}}{|Q_I|_{\alpha}}\rangle_{\alpha}\\
& = & \mathcal{Q}_{\alpha,\gamma}^{\beta}f(w).
\end{eqnarray*}
Hence $Q(z)\subset\mathbb{E}_{\lambda}.$ That is for any $z\in\mathbb{E}_{\lambda},$ there is a dyadic square containing $z$ that is entirely contained in $\mathbb{E}_{\lambda}$. Moreover, as $f$ is compactly supported, this dyadic square cannot be arbitrary large. Thus $\mathbb{E}_{\lambda}$ is a union of maximal dyadic Carleson squares. 
\vskip .2cm
Let $I\in\mathcal{D}^\beta$ be such that $Q_I$ is one of the maximal squares above. Let $\tilde{I}$ be the dyadic parent of $I.$ Then there exists $z_0\in Q_{\tilde{I}}\backslash Q_I$ such that for any $z\in Q_I,$ $$\lambda \ge \mathcal{Q}_{\alpha,\gamma}^{\beta}f(z_0) \ge \mathcal{Q}_{\alpha,\gamma,I}^{\beta,\,\,\mbox{out}}f(z_0) = \mathcal{Q}_{\alpha,\gamma,I}^{\beta,\,\,\mbox{out}}f(z).$$ It follows that for any $z\in Q_J\cap\mathbb{E}_{2\lambda},$ $$\mathcal{Q}_{\alpha,\gamma,I}^{\beta,\,\,\mbox{in}}f(z) > \lambda.$$ Next we fix $\Lambda >0.$ We also fix $\lambda$ such that $0<\lambda<\Lambda$. We recall that $u=\omega^q.$ Define $\mathcal{L}$ to be family of maximal dyadic Carleson squares whose union is $\mathbb{E}_{\lambda}.$ Put 
\begin{equation*}
\mathcal{L}_1 : =  \{ Q_I\in \mathcal{L}: |Q_I\cap\mathbb{E}_{2\lambda}|_{u,\alpha}\ge2^{-q-1}|Q_I|_{u,\alpha}\}
\end{equation*}
and 
\begin{equation*}
\mathcal{L}_2 :=  \mathcal{L}\backslash\mathcal{L}_1.
\end{equation*}
Then
\begin{eqnarray*}
(2\lambda)^q|\mathbb{E}_{2\lambda}|_{u,\alpha} & = & (2\lambda)^q\sum_{Q_I\in\mathcal{L}}|Q_I\cap\mathbb{E}_{2\lambda}|_{u,\alpha}\\
& = & (2\lambda)^q\left(\sum_{Q_I\in\mathcal{L}_1}|Q_I\cap\mathbb{E}_{2\lambda}|_{u,\alpha} + \sum_{Q_I\in\mathcal{L}_2}|Q_I\cap\mathbb{E}_{2\lambda}|_{u,\alpha}\right).
\end{eqnarray*}
We have
\begin{eqnarray*}
(2\lambda)^q\sum_{Q_I\in\mathcal{L}_2}|Q_I\cap\mathbb{E}_{2\lambda}|_{u,\alpha} & \le & (2\lambda)^q2^{-q-1}\sum_{Q_I\in\mathcal{L}_2}|Q_I|_{u,\alpha}\\
& \le & \frac{1}{2}\lambda^q\sum_{Q_I\in\mathcal{L}_2}|Q_I|_{u,\alpha}\\
& \le & \frac{1}{2}\lambda^q|\mathbb{E}_{\lambda}|_{u,\alpha}\\
& \le & \frac{1}{2}\sup_{0<\lambda<\Lambda}\lambda^q|\mathbb{E}_{\lambda}|_{u,\alpha} <\infty.
\end{eqnarray*}
Now
\begin{eqnarray*}
L & : = & (2\lambda)^q\sum_{Q_I\in\mathcal{L}_1}|Q_I\cap\mathbb{E}_{2\lambda}|_{u,\alpha}\\
& = & (2\lambda)^q\sum_{Q_I\in\mathcal{L}_1}|Q_I\cap\mathbb{E}_{2\lambda}|_{u,\alpha}^q|Q_I\cap\mathbb{E}_{2\lambda}|_{u,\alpha}^{1-q}\\
& \le & 2^{q^2-1}(2\lambda)^q\sum_{Q_I\in\mathcal{L}_1}|Q_I\cap\mathbb{E}_{2\lambda}|_{u,\alpha}^q|Q_I|_{u,\alpha}^{1-q}\\
& \le & C(q)\sum_{Q_I\in\mathcal{L}_1}(\lambda|Q_I\cap\mathbb{E}_{2\lambda}|_{u,\alpha})^q|Q_I|_{u,\alpha}^{1-q}\\
& \le & C(q)\sum_{Q_I\in\mathcal{L}_1}(\lambda|\{z\in Q_I:\mathcal{Q}_{\alpha,\gamma,I}^{\beta,\,\,\mbox{in}}f(z)>\lambda\}|_{u,\alpha})^q|Q_I|_{u,\alpha}^{1-q}\\
& \le & C(q)\sum_{Q_I\in\mathcal{L}_1}\left(\int_{Q_I}\left(\mathcal{Q}_{\alpha,\gamma,I}^{\beta,\,\,\mbox{in}}f(z)\right)u(z)\mathrm{d}V_{\alpha}(z)\right)^q|Q_I|_{u,\alpha}^{1-q}\\
& = & C(q)\sum_{Q_I\in\mathcal{L}_1}\left(\int_{Q_I}\left(\mathcal{Q}_{\alpha,\gamma,I}^{\beta,\,\,\mbox{in}}u(z)\right)f(z)\mathrm{d}V_{\alpha}(z)\right)^q|Q_I|_{u,\alpha}^{1-q}
\end{eqnarray*}
where we have used duality and the fact that $\mathcal{Q}_{\alpha,\gamma,I}^{\beta,\,\,\mbox{in}}$ is self-adjoint with respect to the pairing $$\langle f,g \rangle_{\alpha}=\int_{\mathbb{R}^2_+}f(z)\overline{g(z)}\mathrm{d}V_{\alpha}(z).$$ 
Let us estimate $\mathcal{Q}_{\alpha,\gamma,I}^{\beta,\,\,\mbox{in}}u.$ We recall that $$q = \frac{2+\alpha}{2+\alpha-\gamma}\qquad\mbox{and so}\qquad\frac{1}{q'} = \frac{\gamma}{2+\alpha}.$$ We first write 
\begin{eqnarray*}
\mathcal{Q}_{\alpha,\gamma,I}^{\beta,\,\,\mbox{in}}u(z) & = & \sum_{\substack{K\in\mathcal{D}^{\beta}\\ K\subseteq I}}|Q_K|^{\frac{\gamma}{2+\alpha}}_{\alpha}\left(\frac{1}{|Q_K|_{\alpha}}\int_{Q_K}u\mathrm{d}V_{\alpha}\right)1_{Q_K}(z)\\
& = & \sum_{\substack{K\in\mathcal{D}^{\beta}\\ K\subseteq I}}|Q_K|^{\frac{1}{q'}}_{\alpha}\left(\frac{1}{|Q_K|_{\alpha}}\int_{Q_K}u\mathrm{d}V_{\alpha}\right)^{\frac{1}{q}+\frac{1}{q'}}1_{Q_K}(z).
\end{eqnarray*}
We observe that for any $Q_K\subseteq Q_I,\,\,z\in Q_K,$ $$\left(\frac{1}{|Q_K|_{\alpha}}\int_{Q_K}u\mathrm{d}V_{\alpha}\right)^{\frac{1}{q}}\le [\omega]_{B_{1,q,\alpha}}\omega(z).$$ On the other hand, as $u\in B_{1,\alpha}\subset B_{2,\alpha}$ with $[u]_{B_{2,\alpha}}\le [u]_{B_{1,\alpha}}$, we have from Lemma \ref{carnot} and Lemma \ref{l4} that for any $K\in\mathcal{D}^{\beta},$ $$\frac{|B_K|_{u,\alpha}}{|Q_K|_{u,\alpha}}\le\delta$$ where $\delta = 1- \frac{1}{C_\alpha[u]_{B_{1,\alpha}}}>\frac{1}{2}$ where $C_\alpha:=\max\{2,\frac{2^{2+2\alpha}}{(2^{1+\alpha}-1)^2}\}$, $B_K$ being the lower half of $Q_K.$ It follows that if $K_j$ is a descendant of $K$ of the $j-$th generation, then $$\frac{|Q_{K_j}|_{u,\alpha}}{|Q_K|_{u,\alpha}}\le\delta^j.$$ Let us fix $z\in Q_I.$ Then each $K$ in the sum $\mathcal{Q}_{\alpha,\gamma,I}^{\beta,\,\,\mbox{in}}u(z)$ is a descendant of $I$ of some generation and it is the unique dyadic interval of this  generation such that $z\in Q_K.$ It follows that 
\begin{eqnarray*}
\sum_{\substack{K\in\mathcal{D}^{\beta}\\K\subseteq I}}|Q_K|^{\frac{1}{q'}}_{\alpha}\left(\frac{1}{|Q_K|_{\alpha}}\int_{Q_K}u\mathrm{d}V_{\alpha}\right)^{\frac{1}{q'}} & = & \sum_{K\subseteq I}|Q_K|^{\frac{1}{q'}}_{u,\alpha}\\
& \le & \sum_{k=0}^{\infty}\delta^{\frac{k}{q'}}|Q_I|^{\frac{1}{q'}}_{u,\alpha}\\
& \le & \frac{1}{1-\delta^{\frac{1}{q'}}}|Q_I|^{\frac{1}{q'}}_{u,\alpha}.
\end{eqnarray*}
As $\delta >\frac{1}{2},$ $$\frac{1}{1-\delta^{\frac{1}{q'}}}\le \left(\frac{1}{1-\delta}\right)^{\frac{1}{q'}} = C_\alpha^{\frac{1}{q'}}[\omega]^{\frac{q}{q'}}_{B_{1,q,\alpha}}.$$ Hence \begin{equation}\label{eq:estimQbeta}\mathcal{Q}_{\alpha,\gamma,I}^{\beta,\,\,\mbox{in}}u(z)\le C(q,\alpha)[\omega]^{1+\frac{q}{q'}}_{B_{1,q,\alpha}}\omega(z)|Q_I|^{\frac{1}{q'}}_{u,\alpha}.
\end{equation} 
It follows that 
\begin{eqnarray*}
L & : = & (2\lambda)^q\sum_{Q_I\in\mathcal{L}_1}|Q_I\cap\mathbb{E}_{2\lambda}|_{u,\alpha}\\
& \le & C(q,\alpha)[\omega]^{q+\frac{q^2}{q'}}_{B_{1,q,\alpha}}\sum_{Q_I\in\mathcal{L}_1}\left(\int_{Q_I}f(z)\omega(z)\mathrm{d}V_{\alpha}(z)\right)^q|Q_I|^{1-q+\frac{q}{q'}}_{u,\alpha}\\
& = & C(q,\alpha)[\omega]^{q^2}_{B_{1,q,\alpha}}\left(\sum_{Q_I\in\mathcal{L}_1}\int_{Q_I}f\omega\mathrm{d}V_{\alpha}(z)\right)^q\\
& \le & C(\alpha)[\omega]^{q^2}_{B_{1,q,\alpha}}\left(\int_{\mathbb{R}_+^2}f\omega\mathrm{d}V_{\alpha}(z)\right)^q.
\end{eqnarray*}
Putting the two estimates together, we obtain \Be\label{eq:supsup}(2\lambda)^q|\mathbb{E}_{2\lambda}|_{u,\alpha}\le \frac{1}{2}\sup_{0<\lambda<\Lambda}\lambda^q|\mathbb{E}_{\lambda}|_{u,\alpha} + C[\omega]^{q^2}_{B_{1,q,\alpha}}\left(\int_{\mathbb{R}_+^2}f\omega\mathrm{d}V_{\alpha}(z)\right)^q.\Ee Recall that $\sup_{0<\lambda<\Lambda}\lambda^q|\mathbb{E}_{\lambda}|_{u,\alpha}<\infty$. Hence, taking the supremum on the left hand side of the inequality (\ref{eq:supsup}), we get $$\sup_{0<\lambda<\Lambda}\lambda^q|\mathbb{E}_{\lambda}|_{u,\alpha} \le C[\omega]^{q^2}_{B_{1,q,\alpha}}\left(\int_{\mathbb{R}_+^2}f\omega\mathrm{d}V_{\alpha}(z)\right)^q.$$ Letting $\Lambda\rightarrow\infty$, we obtain the estimate \eqref{eq1}.
\end{proof}
We next provide the modifications needed in the above proof to prove Theorem \ref{thm:main4}.
\begin{proof}[Proof of Theorem \ref{thm:main4}]
Assume that $\omega\in B_{1,q,\alpha}.$ We recall with Lemma \ref{lem:sparsedom} that for $f\ge 0,$ $$P_{\alpha}^+f\le \sum_{\beta\in\{0,\frac{1}{3}\}}\mathcal{Q}_{\alpha}^{\beta}f,$$ where $$\mathcal{Q}_{\alpha}^{\beta}f = \sum_{I\in\mathcal{D}^{\beta}}\langle f,\frac{1_{Q_I}}{|Q_I|_{\alpha}}\rangle_{\alpha}\chi_{Q_I}.$$ We also recall with Remark \ref{rem:remarq1} that one only needs to show that the estimate \eqref{eq41} holds with $P_{\alpha}^+$ replaced by $\mathcal{Q}_{\alpha}^{\beta}.$ We still assume that $f$ is supported on some dyadic cube $Q_J,\,\, J\in\mathcal{D}^{\beta}$. We recall that $\eta=(2+\alpha)(q-1)+\alpha$. Put for $\lambda>0,$ $$\mathbb{E}_{\lambda}:=\{z\in\mathbb{R}_+^2:\mathcal{Q}_{\alpha}^{\beta}f(z)>\lambda\}.$$ Let $\Lambda > 0$ be fixed. Let us check as above that $$\sup_{0<\lambda<\Lambda}\lambda^q|\mathbb{E}_{\lambda}|_{u,\eta}<\infty,\quad u=\omega^q.$$ 
 
Still reasoning as in the previous proof, we obtain that there is a constant $C>0$ such that for $z\notin Q_J$, $\mathcal{Q}_{\alpha}^{\beta}f(z)\le C\mathcal{M}_\alpha^{d,\beta} f(z)$, and putting
$$\mathbb{F}_{\lambda}:=\left\{z\in\mathbb{R}_+^2|Q_J : \mathcal{M}^{d,\beta}_{\alpha}f(z)>\frac{\lambda}{C}\right\},$$ we obtain
\begin{eqnarray*}
\sup_{0<\lambda<\Lambda}\lambda^q|\mathbb{E}_{\lambda}|_{u,\eta} & \le & \Lambda^q |Q_J|_{u,\eta}+ \sup_{0<\lambda <\Lambda}\lambda^q|\mathbb{F}_{\lambda}|_{u,\eta}< \infty
\end{eqnarray*}
since $u$ is locally integrable and by Lemma \ref{lem:weakfrac}, $\mathcal{M}_{\alpha}^{d,\beta}$ is bounded from $L^1(u\mathrm{d}V_{\alpha})$ to $L^{q,\infty}(udV_{\eta})$ as the measure $d\mu(z)=u(z)dV_\eta(z)$ is such that for any interval $I$ and any $z\in Q_I$,
\Beas
|Q_I|_\alpha^{-q}\omega^{-q}(z)\mu(Q_I) &=& |Q_I|_\alpha^{-q}\omega^{-q}(z)|Q_I|_{u,\eta}\\ &\le&  |Q_I|_\alpha^{-q}\omega^{-q}(z)|Q_I|_{\alpha}^{q-1}|Q_I|_{u,\alpha}\\ &=& \frac{|Q_I|_{u,\alpha}}{|Q_I|_{\alpha}}\omega^{-q}(z)\\ &\le& [\omega]_{1,q,\alpha}^q.
\Eeas
  Now let us decompose $\mathcal{Q}_\alpha^{\beta}$ as follows $$\mathcal{Q}_{\alpha}^{\beta}f(z) = \mathcal{Q}_{\alpha,J}^{\beta,\,\,\mbox{in}}f(z) + \mathcal{Q}_{\alpha,J}^{\beta,\,\,\mbox{out}}f(z)$$ where $$\mathcal{Q}_{\alpha,J}^{\beta,\,\,\mbox{in}}f(z) = \sum_{\substack{I\in\mathcal{D}^{\beta}\\I\subseteq J}}\left(\frac{1}{|Q_I|_{\alpha}}\int_{Q_I}f\mathrm{d}V_{\alpha}\right)1_{Q_I}(z)$$ and $$\mathcal{Q}_{\alpha,J}^{\beta,\,\,\mbox{out}}f(z) = \sum_{\substack{I\in\mathcal{D}^{\beta}\\I\supseteq J}}\left(\frac{1}{|Q_I|_{\alpha}}\int_{Q_I}f\mathrm{d}V_{\alpha}\right)1_{Q_I}(z).$$ 
\vskip .3cm 
  We also obtain as in the previous proof that for any $z\in Q_J\cap\mathbb{E}_{2\lambda},$ $$\mathcal{Q}_{\alpha,J}^{\beta,\,\,\mbox{in}}f(z) > \lambda.$$ Let us once more fix $\Lambda >0.$ We then also fix $\lambda$ such that $0<\lambda<\Lambda$. We still denote by $\mathcal{L}$ the  family of maximal dyadic Carleson squares whose union is $\mathbb{E}_{\lambda}.$ We also define
\begin{equation*}
\mathcal{L}_1 : =  \{ Q_I\in \mathcal{L}: |Q_I\cap\mathbb{E}_{2\lambda}|_{u,\eta}\ge2^{-q-1}|Q_I|_{u,\eta}\}
\end{equation*}
and 
\begin{equation*}
\mathcal{L}_2 :=  \mathcal{L}\backslash\mathcal{L}_1.
\end{equation*}
Then
\begin{eqnarray*}
(2\lambda)^q|\mathbb{E}_{2\lambda}|_{u,\eta} 
& = & (2\lambda)^q\left(\sum_{Q_I\in\mathcal{L}_1}|Q_I\cap\mathbb{E}_{2\lambda}|_{u,\eta} + \sum_{Q_I\in\mathcal{L}_2}|Q_I\cap\mathbb{E}_{2\lambda}|_{u,\eta}\right).
\end{eqnarray*}
We obtain once more that
\begin{eqnarray*}
(2\lambda)^q\sum_{Q_I\in\mathcal{L}_2}|Q_I\cap\mathbb{E}_{2\lambda}|_{u,\eta} 
& \le & \frac{1}{2}\sup_{0<\lambda<\Lambda}\lambda^q|\mathbb{E}_{\lambda}|_{u,\eta} <\infty.
\end{eqnarray*}
Now, still following the proof of Theorem \ref{thm:main3}, we obtain
\begin{eqnarray*}
L & : = & (2\lambda)^q\sum_{Q_I\in\mathcal{L}_1}|Q_I\cap\mathbb{E}_{2\lambda}|_{u,\eta}\\
& \le & C(q)\sum_{Q_I\in\mathcal{L}_1}\left(\int_{Q_I}\left(\mathcal{Q}_{\alpha,I}^{\beta,\,\,\mbox{in}}f(z)\right)u(z)\mathrm{d}V_{\eta}(z)\right)^q|Q_I|_{u,\eta}^{1-q}\\ &=& C(q)\sum_{Q_I\in\mathcal{L}_1}\left(\sum_{K\subseteq I}\langle f,\frac{1_{Q_K}}{|Q_K|_\alpha}\rangle_\alpha |Q_K|_{u,\eta}\right)^q|Q_I|_{u,\eta}^{1-q}.
\end{eqnarray*}
Now observe that as $1-q<0$ and $T_I\subset Q_I$, we have 
$$|Q_I|_{u,\eta}^{1-q}\le |T_I|_{u,\eta}^{1-q}\le C_{\alpha,\gamma}|I|^{(2+\alpha)(1-q)(q-1)}|T_I|_{u,\alpha}^{1-q}.$$
Hence for any $K\subset I$, we obtain
$$|Q_I|_{u,\eta}^{1-q}\le C_{\alpha,\gamma}|Q_K|_{\alpha}^{(1-q)(q-1)}|T_I|_{u,\alpha}^{1-q}.$$
Note also that $$|Q_K|_{u,\eta}\le |Q_K|_{\alpha}^{q-1}|Q_K|_{u,\alpha}.$$
It follows from these observations that
\begin{eqnarray*}
L &\le&   C(q)\sum_{Q_I\in\mathcal{L}_1}\left(\sum_{K\subseteq I}|Q_K|_{\alpha}^{\frac{1}{q'}}\langle f,\frac{1_{Q_K}}{|Q_K|_\alpha}\rangle_\alpha |Q_K|_{u,\alpha}\right)^q|T_I|_{u,\alpha}^{1-q}\\ &=&  C(q)\sum_{Q_I\in\mathcal{L}_1}\left(\int_{Q_I}\left(\mathcal{Q}_{\alpha,\gamma,I}^{\beta,\,\,\mbox{in}}f(z)\right)u(z)\mathrm{d}V_{\alpha}(z)\right)^q|T_I|_{u,\alpha}^{1-q}.
%& = & c(q)\sum_{Q_I\in\mathcal{L}_1}\left(\int_{Q_I}\left(\mathcal{Q}_{\alpha,\gamma,I}^{\beta,\,\,\mbox{in}}u(z)\right)f(z)\mathrm{d}V_{\alpha}(z)\right)^q|T_I|_{u,\alpha}^{1-q}.
\end{eqnarray*}
Hence
\begin{equation}\label{eq:L}
L\le C(q)\sum_{Q_I\in\mathcal{L}_1}\left(\int_{Q_I}\left(\mathcal{Q}_{\alpha,\gamma,I}^{\beta,\,\,\mbox{in}}u(z)\right)f(z)\mathrm{d}V_{\alpha}(z)\right)^q|T_I|_{u,\alpha}^{1-q}.
\end{equation}
Using (\ref{eq:estimQbeta}) and Lemma \ref{carnot}, we obtain that
\Beas \mathcal{Q}_{\alpha,\gamma,I}^{\beta,\,\,\mbox{in}}u(z) &\le&  C(q,\alpha)[\omega]^{1+\frac{q}{q'}}_{B_{1,q,\alpha}}\omega(z)|Q_I|^{\frac{1}{q'}}_{u,\alpha}\\ &\le& C(q,\alpha)[\omega]^{1+\frac{2q}{q'}}_{B_{1,q,\alpha}}\omega(z)|T_I|^{\frac{1}{q'}}_{u,\alpha}.
\Eeas
Taking this into (\ref{eq:L}), we conclude that
\Beas
L & : = & (2\lambda)^q\sum_{Q_I\in\mathcal{L}_1}|Q_I\cap\mathbb{E}_{2\lambda}|_{u,\eta}\\ &\le& C(q,\alpha)[\omega]^{q+\frac{2q^2}{q'}}_{B_{1,q,\alpha}}\sum_{Q_I\in\mathcal{L}_1}\left(\int_{Q_I}f(z)\omega(z)\mathrm{d}V_{\alpha}(z)\right)^q|T_I|_{u,\alpha}^{\frac{q}{q'}+1-q}\\ &\le& C(q,\alpha)[\omega]^{2q^2-q}_{B_{1,q,\alpha}}\left(\int_{\mathbb{R}_+^2}f(z)\omega(z)\mathrm{d}V_{\alpha}(z)\right)^q.
\Eeas
The remaining of the proof then follows as in the last part of the proof of Theorem \ref{thm:main3}.
\end{proof}
\section{Proof of Theorem \ref{thm:main1} and Theorem \ref{thm:main2}}
We start this section with the proof of Theorem \ref{thm:main1}. 

\begin{proof}[Proof of Theorem \ref{thm:main1}]
We start by considering the sufficiency. We recall that for any $f\ge 0,$ $$T_{\alpha,\gamma}f\le \sum_{\beta\in\{0,\frac{1}{3}\}}\mathcal{Q}_{\alpha,\gamma}^{\beta}f$$ where the dyadic operators $\mathcal{Q}_{\alpha,\gamma}^{\beta}$ are given by (\ref{eq:dyadicoperator}). Thus the question reduces to proving that $$\mathcal{Q}_{\alpha,\gamma}^{\beta} : L^p(\omega^p\mathrm{d}V_{\alpha})\longrightarrow L^q(\omega^q\mathrm{d}V_{\alpha})$$ is bounded. Let us put $u=\omega^q$ and $\sigma=\omega^{-p'}.$ For any $g\in L^{q'}(\omega^{-q'}\mathrm{d}V_{\alpha}),$ we would like to estimate \begin{eqnarray*}
\langle T_{\alpha,\gamma}f,g\rangle_{\alpha} & = & \sum_{I\in\mathcal{D}^{\beta}}|Q_I|^{\frac{\gamma}{2+\alpha}-1}_{\alpha}\left(\int_{Q_I}f\mathrm{d}V_{\alpha}\right)\left(\int_{Q_I}g\mathrm{d}V_{\alpha}\right)\\
& = & \sum_{I\in\mathcal{D}^{\beta}}|Q_I|^{\frac{\gamma}{2+\alpha}-1}_{\alpha}\left(\int_{Q_I}(f\sigma^{-1})\sigma\mathrm{d}V_{\alpha}\right)\left(\int_{Q_I}(gu^{-1})u\mathrm{d}V_{\alpha}\right).
\end{eqnarray*}
Put $$\mathcal{S}_{\sigma,\alpha}(f,Q_I) = \frac{1}{|Q_I|_{\sigma,\alpha}}\int_{Q_I}f\sigma\mathrm{d}V_{\alpha}$$ 
and $$\mathcal{S}_{u,\alpha,\gamma}(g,Q_I) = \frac{1}{|Q_I|_{u,\alpha}^{1-\frac{\gamma}{2+\alpha}}}\int_{Q_I}gu\mathrm{d}V_{\alpha}.$$ 
Then 
\begin{eqnarray*}
L & : = & \langle T_{\alpha,\gamma}f,g\rangle_{\alpha}\\ &=& \sum_{I\in\mathcal{D}^{\beta}}|Q_I|^{\frac{\gamma}{2+\alpha}-1}_{\alpha}|Q_I|_{\sigma,\alpha}|Q_I|_{u,\alpha}^{1-\frac{\gamma}{2+\alpha}}\mathcal{S}_{\sigma,\alpha}(f\sigma^{-1},Q_I)\mathcal{S}_{u,\alpha}(gu^{-1},Q_I).
\end{eqnarray*}
Now observe that if $r=1+\frac{q}{p'}$, then $[u]_{\mathcal{B}_{r,\alpha}}=[\omega]_{B_{p,q,\alpha}}^q$ and $[\sigma]_{\mathcal{B}_{r',\alpha}}=[\omega]_{B_{p,q,\alpha}}^{p'}$. It follows using Lemma \ref{carnot} that
\Beas
|Q_I|^{\frac{\gamma}{2+\alpha}-1}_{\alpha}|Q_I|_{\sigma,\alpha}|Q_I|_{u,\alpha}^{1-\frac{\gamma}{2+\alpha}} &=& |Q_I|^{-\frac{1}{p'}-\frac{1}{q}}_{\alpha}|Q_I|_{\sigma,\alpha}^{\frac{1}{p'}+\frac{1}{p}}|Q_I|_{u,\alpha}^{\frac{1}{p'}+\frac{1}{q}}\\ &\le& [\omega]_{B_{p,q,\alpha}}|Q_I|_{\sigma,\alpha}^{\frac{1}{p}}|Q_I|_{u,\alpha}^{\frac{1}{p'}}\\ &\le& C_{p,\alpha,\gamma}[\omega]_{B_{p,q,\alpha}}[u]_{\mathcal{B}_{r,\alpha}}^{\frac{1}{p'}}[\sigma]_{\mathcal{B}_{r',\alpha}}^{\frac{1}{p}}|T_I|_{\sigma,\alpha}^{\frac{1}{p}}|T_I|_{u,\alpha}^{\frac{1}{p'}}\\ &=& C_{p,\alpha,\gamma}[\omega]^{1+\frac{p'}{p} + \frac{q}{p'}}_{B_{p,q,\alpha}}|T_I|_{\sigma,\alpha}^{\frac{1}{p}}|T_I|_{u,\alpha}^{\frac{1}{p'}}.
\Eeas
Hence
\begin{eqnarray*}
L  &:=& \langle T_{\alpha,\gamma}f,g\rangle_{\alpha}\\ & \le & C[\omega]^{1+\frac{p'}{p} + \frac{q}{p'}}_{B_{p,q,\alpha}}\sum_{I\in\mathcal{D}^{\beta}}\mathcal{S}_{\sigma,\alpha}(f\sigma^{-1},Q_I)|T_I|^{\frac{1}{p}}_{\sigma,\alpha}\mathcal{S}_{u,\alpha}(gu^{-1},Q_I)|T_I|^{\frac{1}{p'}}_{u,\alpha}\\ &\le& C[\omega]^{1+\frac{p'}{p} + \frac{q}{p'}}_{B_{p,q,\alpha}}L_1\times L_2
\end{eqnarray*}
where
$$L_1:=\left(\sum_{I\in\mathcal{D}^{\beta}}\mathcal{S}_{\sigma,\alpha}(f\sigma^{-1},Q_I)^p|T_I|_{\sigma,\alpha} \right)^{\frac{1}{p}}$$
and 
$$L_2:=\left(\sum_{I\in\mathcal{D}^{\beta}}\mathcal{S}_{u,\alpha}(fu^{-1},Q_I)^{p'}|T_I|_{u,\alpha} \right)^{\frac{1}{p'}}.$$
We easily obtain with the help of Lemma \ref{fracmax} that
\Beas
L_1 &:=& \left(\sum_{I\in\mathcal{D}^{\beta}}\mathcal{S}_{\sigma,\alpha}(f\sigma^{-1},Q_I)^p|T_I|_{\sigma,\alpha} \right)^{\frac{1}{p}}\\ &=& \left(\sum_{I\in\mathcal{D}^{\beta}}\int_{T_I}\mathcal{S}_{\sigma,\alpha}(f\sigma^{-1},Q_I)^p\sigma\mathrm{d}V_{\alpha} \right)^{\frac{1}{p}}\\ &\le& \left(\int_{\mathbb{R}_+^2}(\mathcal{M}_{\sigma,\alpha}^{d,\beta}(f\sigma^{-1})(z))^p\sigma(z)\mathrm{d}V_{\alpha}(z) \right)^{\frac{1}{p}}\\ &\le& C\left(\int_{\mathbb{R}_+^2}(f\sigma^{-1})^p\sigma\mathrm{d}V_{\alpha} \right)^{\frac{1}{p}} = C\left(\int_{\mathbb{R}_+^2}(f\omega)^p\mathrm{d}V_{\alpha} \right)^{\frac{1}{p}}.
\Eeas
Observing that $\frac{1}{q'}-\frac{1}{p'}=\frac{\gamma}{2+\alpha}$, we obtain with the help of Lemma \ref{fracmax} that
\Beas
L_2 &:=& \left(\sum_{I\in\mathcal{D}^{\beta}}\mathcal{S}_{u,\alpha}(fu^{-1},Q_I)^{p'}|T_I|_{u,\alpha} \right)^{\frac{1}{p'}}\\ &=& \left(\sum_{I\in\mathcal{D}^{\beta}}\int_{T_I}\mathcal{S}_{u,\alpha}(gu^{-1},Q_I)^{p'}u\mathrm{d}V_{\alpha} \right)^{\frac{1}{p'}}\\ &\le& \left(\int_{\mathbb{R}_+^2}(\mathcal{M}_{u,\alpha,\gamma}^{d,\beta}(gu^{-1}(z))^{p'}u(z)\mathrm{d}V_{\alpha}(z) \right)^{\frac{1}{p'}}\\ &\le& C \left(\int_{\mathbb{R}_+^2}(gu^{-1})^{q'}u\mathrm{d}V_{\alpha} \right)^{\frac{1}{q'}}= C\left(\int_{\mathbb{R}_+^2}(g\omega^{-1})^{q'}\mathrm{d}V_{\alpha} \right)^{\frac{1}{q'}}.
\Eeas
Hence,
\begin{eqnarray*}
L  &:=& \langle T_{\alpha,\gamma}f,g\rangle_{\alpha}\\ 
& \le & C[\omega]^{1+\frac{p'}{p} + \frac{q}{p'}}_{B_{p,q,\alpha}}\left(\int_{\mathbb{R}_+^2}(f\omega)^p\mathrm{d}V_{\alpha} \right)^{\frac{1}{p}} \left(\int_{\mathbb{R}_+^2}(g\omega^{-1})^{q'}\mathrm{d}V_{\alpha} \right)^{\frac{1}{q'}}.
\end{eqnarray*}
That is $$\langle T_{\alpha,\gamma}f,g\rangle_{\alpha} \le C[\omega]^{1+\frac{p'}{p} + \frac{q}{p'}}_{B_{p,q,\alpha}}\|f\omega\|_{p,\alpha}\|g\omega^{-1}\|_{q',\alpha}.$$ Hence taking the supremum over all $g\in L^{q'}(\omega^{-q'}\mathrm{d}V_{\alpha})$ with $\|g\omega^{-1}\|_{q',\alpha} = 1,$ we obtain $$\|(T_{\alpha,\gamma}f)\omega\|_{q,\alpha}\le C[\omega]^{1+\frac{p'}{p} + \frac{q}{p'}}_{B_{p,q,\alpha}}\|f\omega\|_{p,\alpha}.$$ The proof of the sufficiency is complete.
\vskip .2cm
To prove that the condition $\omega\in B_{p,q,\alpha}$ is necessary, recall that for any $f>0$, $$\mathcal{M}_{\alpha,\gamma}f(z)\le T_{\alpha,\gamma}f(z),\,\,\,\forall z\in \mathbb{R}_+^2.$$
Hence the boundedness of $T_{\alpha,\gamma}$ from $L^p(\omega^p\mathrm{d}V_{\alpha})$ to $L^q(\omega^q\mathrm{d}V_{\alpha})$ implies the boundedness of the maximal function $\mathcal{M}_{\alpha,\gamma}$ from $L^p(\omega^p\mathrm{d}V_{\alpha})$ to $L^q(\omega^q\mathrm{d}V_{\alpha})$. In particular, it implies that $\mathcal{M}_{\alpha,\gamma}$ is bounded from $L^p(\omega^p\mathrm{d}V_{\alpha})$ to $L^{q,\infty}(\omega^q\mathrm{d}V_{\alpha})$ which by Lemma \ref{lem:weakfrac} implies that $\omega\in B_{p,q,\alpha}$. The proof is complete.
\end{proof}
\vskip .3cm
\begin{proof}[Proof of Theorem \ref{thm:main2}]
Recall that for any $f>0,$ $$T_{\alpha,\gamma}f \lesssim\sum_{\beta\in \{0,\frac 13\}}Q_{\alpha,\gamma}^\beta f$$ where $Q_{\alpha,\gamma}^\beta $ is given by (\ref{eq:dyadicoperator}). The boundedness of $T_{\alpha,\gamma}$ then follows from the boundedness of $$\mathcal{Q}_{\alpha,\gamma}^{\beta}:L^{p_0}(\omega^{p_0}dV_{\alpha})\longrightarrow L^{q_0}(\omega^{q_0}dV_{\alpha}).$$ We observe that the latter is equivalent to the boundedness of $$\mathcal{Q}_{\alpha,\gamma}^{\beta}(\sigma\cdot):L^{p_0}(\sigma\mathrm{d}V_{\alpha})\longrightarrow L^{q_0}(u\mathrm{d}V_{\alpha})$$ where $\sigma=\omega^{-p_0'}$ and $u = \omega^{q_0}.$ For any $0<f\in L^{p_0}(\sigma\mathrm{d}V_{\alpha})$ and any $0<g\in L^{q_0'}(u\mathrm{d}V_{\alpha})$, we only have to estimate the quantity $\langle \mathcal{Q}_{\alpha}^{\beta}(\sigma f), gu\rangle_{\alpha}$. 

We have
\begin{eqnarray*}
L & := & \langle \mathcal{Q}_{\alpha}^{\beta}(\sigma f), gu\rangle_{\alpha}\\
& = & \sum_{I\in\mathcal{D}^\beta}\langle\sigma f, \frac{1_{Q_I}}{|I|^{2+\alpha -\gamma}}\rangle_{\alpha}\langle gu, 1_{Q_I}\rangle_{\alpha}\\
& = & \sum_{I\in\mathcal{D}^\beta}\frac{1}{|Q_I|_{\alpha}^{1-\frac{\gamma}{2+\alpha}}}\langle\sigma f, 1_{Q_I}\rangle_{\alpha}\langle gu, 1_{Q_I}\rangle_{\alpha}\\
& = & \sum_{I\in\mathcal{D}^\beta}\frac{|Q_I|_{u,\alpha}}{|Q_I|_{\alpha}}\left(\frac{|Q_I|_{\sigma,\alpha}}{|Q_I|_{\alpha}}\right)^{1-\frac{\gamma}{2+\alpha}}\langle\sigma f, \frac{1_{Q_I}}{|Q_I|_{\sigma,\alpha}^{1-\frac{\gamma}{2+\alpha}}}\rangle_{\alpha}\langle gu, \frac{1_{Q_I}}{|Q_I|_{u,\alpha}}\rangle_{\alpha}\times|Q_I|_{\alpha}\\
& \le & [\omega]_{B_{p_0,q_0,\alpha}}^{q_0}\sum_{I\in\mathcal{D}^\beta}|Q_I|_{\alpha}\left(\frac{1}{|Q_I|_{\sigma,\alpha}^{1-\frac{\gamma}{2+\alpha}}}\int_{Q_I}f\sigma dV_{\alpha}\right)\left(\frac{1}{|Q_I|_{\sigma,\alpha}}\int_{Q_I}gu dV_{\alpha}\right).
\end{eqnarray*}
Observe that $$u^{\frac{1}{1-\frac{\gamma}{2+\alpha}}}\sigma = 1 = u^{\frac{1}{q_0}\cdot\frac{1}{1-\frac{\gamma}{2+\alpha}}}\sigma^{\frac{1}{q_0}}= u^{\frac{1}{q_0'}}\sigma^{\frac{1}{q_0}}$$ and so $$|Q_I|_{\alpha}\simeq |T_I|_{\alpha}=\int_{T_I}u^{\frac{1}{q_0'}}\sigma^{\frac{1}{q_0}}dV_{\alpha}\le |T_I|^{\frac{1}{q_0}}_{\sigma,\alpha}|T_I|^{\frac{1}{q_0'}}_{u,\alpha}.$$ It follows using the H\"older's inequality and Lemma \ref{fracmax} that
\begin{eqnarray*}
L & := & \langle \mathcal{Q}_{\alpha}^{\beta}(\sigma f), gu\rangle_{\alpha}\\
& \le & [\omega]_{B_{p_0,q_0,\alpha}}^{q_0}\sum_{I\in\mathcal{D}^\beta}|T_I|_{\sigma,\alpha}^{\frac{1}{q_0}}\left(\frac{1}{|Q_I|_{\sigma,\alpha}^{1-\frac{\gamma}{2+\alpha}}}\int_{Q_I}f\sigma dV_{\alpha}\right)\times\\ && |T_I|_{u,\alpha}^{\frac{1}{q_0'}}\left(\frac{1}{|Q_I|_{u,\alpha}}\int_{Q_I}gu dV_{\alpha}\right)\\
& \le &  [\omega]_{B_{p_0,q_0,\alpha}}^{q_0}\left(\sum_{I\in\mathcal{D}^\beta}|T_I|_{\sigma,\alpha}\left(\frac{1}{|Q_I|_{\sigma,\alpha}^{1-\frac{\gamma}{2+\alpha}}}\int_{Q_I}f\sigma dV_{\alpha}\right)^{q_0} \right)^{\frac{1}{q_0}}\times \\
& & \qquad\left(\sum_{I\in\mathcal{D}^\beta}|T_I|_{u,\alpha}\left(\frac{1}{|Q_I|_{u,\alpha}}\int_{Q_I}gu dV_{\alpha}\right)^{q_0'} \right)^{\frac{1}{q_0'}}\\
& \le & [\omega]_{B_{p_0,q_0,\alpha}}^{q_0}\|\mathcal{M}_{\sigma,\alpha,\gamma}^{d,\beta}f\|_{q_0,\sigma,\alpha} \|\mathcal{M}_{u,\alpha}^{d,\beta}g\|_{q_0',u,\alpha}\\
& \le & C_{\alpha,\gamma}[\omega]_{B_{p_0,q_0,\alpha}}^{q_0}\|f\|_{p_0,\sigma,\alpha}\|g\|_{q_0',u,\alpha}.
\end{eqnarray*}
The proof is complete.
\end{proof}
\vskip .3cm
The author would like to thank the anonymous referees for carefully reading the manuscript and making suggestions that improved the presentation of the paper.

\end{document}